\numberwithin{equation}{section} 
\newtheorem{theorem}{Theorem}[section]
\newtheorem{lemma}[theorem]{Lemma}
\newtheorem{proposition}[theorem]{Proposition}
\theoremstyle{definition}
\newtheorem{definition}[equation]{Definition}
\theoremstyle{remark}
\newtheorem{remark}[equation]{Remark}
\def\dx{\,dx}
\def\dt{\,dt}
\def\ds{\,ds}
\newcommand{\om}{{\omega(\cdot)}}
\newcommand{\R}{\mathbb{R}}
\newcommand{\N}{\mathbb{N}}
\DeclareMathOperator{\divergence}{div}
\DeclareMathOperator{\loc}{loc}
\DeclareMathOperator*{\osc}{osc}
\newcommand{\intav}[1]{\mathchoice {\mathop{\vrule width 6pt height 3 pt depth  -2.5pt
\kern -8pt \intop}\nolimits_{\kern -6pt#1}} {\mathop{\vrule width
5pt height 3  pt depth -2.6pt \kern -6pt \intop}\nolimits_{#1}}
{\mathop{\vrule width 5pt height 3 pt depth -2.6pt \kern -6pt
\intop}\nolimits_{#1}} {\mathop{\vrule width 5pt height 3 pt depth
-2.6pt \kern -6pt \intop}\nolimits_{#1}}}
\def\mean#1{\mathchoice%
          {\mathop{\kern 0.2em\vrule width 0.6em height 0.69678ex depth -0.58065ex
                  \kern -0.8em \intop}\nolimits_{\kern -0.4em#1}}%
          {\mathop{\kern 0.1em\vrule width 0.5em height 0.69678ex depth -0.60387ex
                  \kern -0.6em \intop}\nolimits_{#1}}%
          {\mathop{\kern 0.1em\vrule width 0.5em height 0.69678ex
              depth -0.60387ex
                  \kern -0.6em \intop}\nolimits_{#1}}%
          {\mathop{\kern 0.1em\vrule width 0.5em height 0.69678ex depth -0.60387ex
                  \kern -0.6em \intop}\nolimits_{#1}}}
\def\vintslides_#1{\mathchoice%
          {\mathop{\kern 0.1em\vrule width 0.5em height 0.697ex depth -0.581ex
                  \kern -0.6em \intop}\nolimits_{\kern -0.4em#1}}%
          {\mathop{\kern 0.1em\vrule width 0.3em height 0.697ex depth -0.604ex
                  \kern -0.4em \intop}\nolimits_{#1}}%
          {\mathop{\kern 0.1em\vrule width 0.3em height 0.697ex depth -0.604ex
                  \kern -0.4em \intop}\nolimits_{#1}}%
          {\mathop{\kern 0.1em\vrule width 0.3em height 0.697ex depth -0.604ex
                  \kern -0.4em \intop}\nolimits_{#1}}}
\newcommand{\aveint}[2]{\mathchoice%
          {\mathop{\kern 0.2em\vrule width 0.6em height 0.69678ex depth -0.58065ex
                  \kern -0.8em \intop}\nolimits_{\kern -0.45em#1}^{#2}}%
          {\mathop{\kern 0.1em\vrule width 0.5em height 0.69678ex depth -0.60387ex
                  \kern -0.6em \intop}\nolimits_{#1}^{#2}}%
          {\mathop{\kern 0.1em\vrule width 0.5em height 0.69678ex depth -0.60387ex
                  \kern -0.6em \intop}\nolimits_{#1}^{#2}}%
          {\mathop{\kern 0.1em\vrule width 0.5em height 0.69678ex depth -0.60387ex
                  \kern -0.6em \intop}\nolimits_{#1}^{#2}}}
\newcommand{\eps}{\varepsilon}
\begin{document}

\title[Quantitative modulus of continuity]{A quantitative modulus of continuity\\ for the two-phase Stefan problem}

\author{Paolo Baroni}
\address{Uppsala University, Department of Mathematics, L\"agerhyddsv\"agen 1, SE-751 06 Uppsala, Sweden.}
\email{paolo.baroni@math.uu.se}

\author{Tuomo Kuusi}
\address{Aalto University, Institute of Mathematics, P.O. Box 11100, FI-00076 Aalto, Finland.}
\email{tuomo.kuusi@tkk.fi}

\author{Jos\'e Miguel Urbano}
\address{CMUC, Department of Mathematics, University of Coimbra, 3001-501 Coimbra, Portugal.}
\email{jmurb@mat.uc.pt}

\allowdisplaybreaks
\date{\today}

\keywords{Stefan problem, degenerate equations, intrinsic scaling, expansion of positivity, modulus of continuity}
\subjclass[2010]{Primary 35B65. Secondary 35K65, 80A22.}

\begin{abstract}
We derive the quantitative modulus of continuity
$$
\omega(r)=\left[  p+\ln \left( \frac{r_0}{r} \right) \right]^{-\alpha (n,p)},
$$
which we conjecture to be optimal, for solutions of the $p$-degenerate two-phase Stefan problem.
Even in the classical case $p=2$, this represents a twofold improvement with respect to the 1984 state-of-the-art result by DiBenedetto and Friedman \cite{DiBeFrie84}, in the sense that we discard one logarithm iteration and obtain an explicit value for the exponent $\alpha (n,p)$.
\end{abstract}

\maketitle

\section{Introduction}

This paper concerns  the local behaviour of bounded weak solutions of the degenerate two-phase Stefan problem
\begin{equation} \label{the equation}
\partial_t \big[u+{\mathcal L}_h \, H_a(u)\big] \ni \mathrm{div}\, \big[|Du|^{p-2} Du\big]\,, \qquad p\geq 2\,,
\end{equation}
where $H_a$ is the Heaviside graph centred at $a\in\R$, defined by
\begin{equation}\label{Heavi}
H_a(s)=
\begin{cases}
0 & \text{if $s<a$}\,,\\[1pt]
[0,1] & \text{if $s=a$}\,,\\[1pt]
1 & \text{if $s> a$}\,,
 \end{cases}
\end{equation}
and ${\mathcal L}_h>0$. Our main result is the derivation of the explicit, interior modulus of continuity 
\begin{equation}\label{modulus_of_continuity}
\omega (r) := \left[  p+\ln \left( \frac{r_0}{r} \right) \right]^{-\alpha (n,p)},\qquad 0<r\leq r_0\,,
\end{equation}
which we conjecture to be optimal.

An extensive literature, both from the theoretical and the computational points of view, is available for the classical Stefan problem
\begin{equation}\label{Stefan}
\partial_t \big[u+{\mathcal L}_h \, H_0(u)\big] \ni \triangle u,
\end{equation}
corresponding to the case $p=2$, which is a simplified model to describe the evolution of the configuration of a substance which is changing phase, when convective effects are neglected. The function $u$ represents the temperature and the value $u=0$ is the level at which the change of phase occurs; the height ${\mathcal L}_h$ of the jump of the graph ${\mathcal L}_h \, H_0(\cdot)$ corresponds to the latent heat of fusion and a selection of the graph is called the {\em enthalpy} of the problem. For simplicity, we consider ${\mathcal L}_h \leq 1$ from now on. The case of study of positive solutions (note we are taking $a=0$ in \eqref{Stefan}) is usually called one-phase Stefan problem, while if no sign assumptions are made on $u$ we are dealing with the    two-phase Stefan problem; see \cite{Friedman, russ} for the deduction of \eqref{Stefan} from the classic formulation, which goes back to Stefan at the end of the nineteenth century \cite{Ste_before_history} and has been subsequently developed in \cite{Kam, Ole}. We mention that the model \eqref{Stefan} also finds applications in finance \cite{finance_book}, biology related to the Lotka-Volterra model \cite{Bio1}, and flows of solutes or gases in porous media \cite{Rubin}.

Clearly \eqref{Stefan} and \eqref{the equation} need to be understood using an appropriate notion of (weak) solution, and the one we employ is that of {\em differential inclusion} in the sense of graphs, see Definition \ref{deff}; other approaches can be used and the most noticeable one is that of {\em viscosity solutions} in the sense introduced by Crandall and Lions, and developed by Caffarelli; see \cite{ACS1} and the recent survey by Salsa \cite{Salsa12}. Notice that weak solutions are viscosity solutions once one knows they are continuous (and in fact they are, see the following lines); under an additional conditions (namely, $\{u=0\}$ is negligible) the converse also holds true, see \cite{Kim}.

For the one-phase Stefan problem \eqref{Stefan}, continuity of weak solutions has been proved by Caffarelli and Friedman in \cite{CaFri79}, with an explicit modulus of continuity:
\[
C\left[ \ln \left( \frac{r_0}{r} \right)\right]^{-\epsilon},  \quad \mathrm{if} \ n\geq3\,; \qquad C\, 2^{-\left[ \ln \left( \frac{r_0}{r} \right)\right]   ^\gamma},   \quad \mathrm{if} \  n=2\,,
\]
for a positive constant $C$, for any $0<\epsilon < \frac{2}{n-2}$ and for any $0<\gamma<\frac12$. For the two-phase problem, continuity was proved, almost at the same time, by Caffarelli and Evans \cite{CafEv83} for \eqref{Stefan}, and by DiBenedetto \cite{DiB2}, who considered more general, nonlinear structures for the elliptic part, albeit with linear growth with respect to the gradient, and lower order terms depending on the temperature, which is relevant when convection is taken into  account: 
\begin{equation}\label{general_Stefan}
\partial_t \big[u+H_0(u)\big]\ni\divergence a(x,t,u,Du)+b(x,t,u,Du),\quad a(x,t,u,Du)\approx Du; 
\end{equation}
see also \cite{Ziemer, Sacks}. More general structures including multi-phase Stefan problem were considered in~\cite{DiBeVesp95}.  Moreover, in the first of their celebrated papers about the gradient regularity for solutions of parabolic $p$-Laplace equations, DiBenedetto and Friedman state (without proof) that the method of the paper yields as modulus of continuity for the solutions of the two-phase Stefan problem
\begin{equation}\label{loglog}
\omega(r)=\left[  \ln \ln \left( \frac{Ar_0}{r} \right) \right]^{-\sigma}, \quad \text{for some $A,\sigma>0$\,,} 
\end{equation}
see \cite[Remark 3.1]{DiBeFrie84}; this seems to be the first instance in which an explicit modulus of continuity appears in the study of the two-phase Stefan problem. Details are somehow pointed out in \cite{DiB3}, where DiBenedetto shows that, in the case of H\"older continuous boundary data, the solution of the Cauchy-Dirichlet problem for equation \eqref{general_Stefan} has modulus of continuity \eqref{loglog}, giving a quantitative form to the up-to-the-boundary continuity result previously proved by Ziemer in \cite{Ziemer}. These, to the best of our knowledge, are the last quantitative results concerning the continuity of the solutions of the classical two-phase Stefan problem.

For the degenerate case, $p>2$ in \eqref{the equation}, very little is known. Existence was obtained  by one of the authors in \cite{UEx} using an approximation method; subsequently he proved the continuity \cite{UCon} of at least one of them, in the spirit of \cite{DiB2}, circumventing the additional difficulties resulting from the presence of the $p$-Laplacian in the elliptic part. The continuity proof only leads to an implicit modulus of continuity.

\vspace{3mm}

Our derivation of the modulus of continuity \eqref{modulus_of_continuity} represents an improvement with respect to the state-of-the-art in several ways: we discard an iteration of the logarithm, reaching what we conjecture to be the sharp, optimal modulus of continuity for the two-phase Stefan problem; we determine the precise value of the exponent $\alpha$ in terms of the data of the problem; we cover the degenerate case $p > 2$ and we provide a comprehensive proof.

\subsection{Statement of the problem and main result}

More generally, we shall consider the following extension of~\eqref{the equation}:
\begin{equation} \label{the general equation}
\partial_t \big[\beta(u)+{\mathcal L}_h \, H_a(\beta(u))\big] \ni \mathrm{div}\, \mathcal{A}(x,t,u,Du) \qquad\text{ in $\;\Omega_T:=\Omega\times(0,T)$}\,,
\end{equation}
where $\Omega$ is a bounded domain of $\R^n$, $n\geq2$. $H_a$ is defined in \eqref{Heavi}, $\beta:\R\to\R$ is a $C^1$-diffeomorphism such that $\beta(0)=0$ and satisfying the  bi-Lipschitz condition
\begin{equation*}
\Lambda^{-1} |u-v| \leq |\beta(u) - \beta(v)| \leq \Lambda |u-v|
\end{equation*}
for some given $\Lambda\geq1$ and the vector field $\mathcal{A}$ is measurable with respect to the first two variables and continuous
with respect to the last two, satisfying, in addition, the following growth and ellipticity assumptions:
\begin{equation}\label{p.laplacian.assumptions}
\qquad|\mathcal{A}(x,t,u,\xi)| \leq \Lambda |\xi|^{p-1} \,, \qquad \langle \mathcal{A}(x,t,u,\xi) , \xi \rangle \geq \Lambda^{-1} |\xi|^p \,;
\end{equation}
the previous inequalities are intended to hold for almost any $(x,t)\in\Omega_T$ and for all  $(u,\xi)\in\R\times\R^n$. We consider the bi-Lipschitz function $\beta$  in order to include thermal properties of the medium, which may slightly change with respect to the temperature, as already done in \cite{DiB2, Noche87}.

\begin{definition}\label{deff}
A {\em local weak solution} to equation \eqref{the general equation} is a function
\[
u\in  L^\infty_{\loc}(0,T;L^2_{\loc}(\Omega))\cap L^p_{\loc}(0,T;W^{1,p}_{\loc}(\Omega))=:V^{2,p}_{\loc}(\Omega_T)
\]
such that a selection $v\in \beta(u)+{\mathcal L}_h H_a(\beta(u))$ satisfies the integral identity
\[
\int_{\mathcal K}[ v\,\varphi](\cdot,\tau) \dx\biggr|_{\tau=t_1}^{t_2} +\int_{\mathcal K\times[t_1,t_2]} \big[-v\, \partial_t\varphi  +\langle \mathcal{A}(\cdot, \cdot, u,Du) ,D\varphi\rangle\big]\dx\dt= 0
\]
for all $\mathcal K\Subset\Omega$, almost every $t_1,t_2\in\R$ such that $[t_1,t_2]\Subset(0,T)$ and for every test function $\varphi \in L^p_{\loc}(0,T;W^{1,p}_0(\mathcal K))$ such that $\partial_t\varphi\in L^2(\mathcal K\times[t_1,t_2])$.
\end{definition}

We assume in this paper that a {\em local weak solution} can be obtained as a locally uniform limit of locally H\"older continuous solutions to \eqref{the general equation} for a regularized graph, see Section~\ref{sec: approx}. In \cite{BKU2} we construct such a solution for the Cauchy-Dirichlet problem with continuous boundary datum; we derive, in addition, an explicit modulus of continuity up to the boundary. We refer to \cite{Friedman_book, russ} for the existence of weak solutions for bounded Cauchy-Dirichlet data in the case $p=2$. For the case $p>2$ the solution, whose existence can be retrieved from \cite{UCon}, is known to be unique just for homogeneous Dirichlet data, see \cite{NoUr03}.

Our main result is the derivation of a quantitative modulus of continuity for local weak solutions to \eqref{the general equation}.
\begin{theorem}\label{interior.continuity}
Let $u$ be a local weak solution to \eqref{the general equation}, obtained by approximation, and let 
\begin{equation}\label{ratio.kappa}
\alpha :=  \begin{cases}
  \displaystyle{\frac{p}{n+p}} \qquad&  \text{for }p<n\,,\\[3pt]
  \mbox{any number $\displaystyle{<\frac12}$} \qquad& \text{for }p=n\,,\\[5pt]
   \displaystyle{\frac12}\qquad & \text{for }p>n\,.
\end{cases}
 \end{equation}
Then there exist constants $M,L$ and $c_\ast$, larger than one and depending only on $n,p,\Lambda$ and $\alpha$, such that, considering the modulus of continuity
\begin{equation} \label{choice modulus}
\omega (r) =  L\left[ p+{\ln\Bigl( \frac{r_0}{r}\Bigr)}\right]^{-\alpha}
\end{equation}
and cylinders
\begin{equation}\label{cyl}
\overline Q_r^{\omega(\cdot)} :=B_{r}(x_0)\times (t_0-M \max\{\osc_{\Omega_T} u,1 \}^{2-p} [ \omega(r) ]^{(2-p)(1+1/\alpha)} r^p,t_0)\,,
\end{equation}
we have that if $\overline Q_{r_0}^{\omega(\cdot)}\subset\Omega_T$, then
\begin{equation}\label{oscillation.interior}
 \osc_{\overline Q_r^{\omega(\cdot)}} u \leq c_\ast \, \omega(r) \max\{\osc_{\Omega_T} u,1 \} 
\end{equation}
holds for all $r \in (0,r_0]$.
\end{theorem}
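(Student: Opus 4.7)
The plan is to establish \eqref{oscillation.interior} via a DiBenedetto-type intrinsic-scaling induction on a sequence of nested shrinking cylinders centred at $(x_0,t_0)$. Since dividing $u$ by $\max\{\osc_{\Omega_T}u,1\}$ is compatible with \eqref{p.laplacian.assumptions} up to adjusting $\Lambda$, one may assume $\osc_{\Omega_T}u\leq 1$ throughout. Fix a contraction factor $\lambda>1$ to be determined, set $r_j:=\lambda^{-j}r_0$ and $\omega_j:=\omega(r_j)=L(p+j\ln\lambda)^{-\alpha}$, and consider the nested cylinders $Q_j:=\overline Q_{r_j}^{\omega(\cdot)}$ from \eqref{cyl}. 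Thanks to the exact identity $\omega_{j+1}^{-1/\alpha}-\omega_j^{-1/\alpha}=L^{-1/\alpha}\ln\lambda$, the bound \eqref{oscillation.interior} reduces, after passing from the discrete radii $r_j$ to a general $r\in(r_{j+1},r_j]$, to the inductive statement $\osc_{Q_j}u\leq c_\ast\omega_j$ for every $j\geq 0$, which in turn is equivalent to a one-step recursive decay of the form $\omega_{j+1}\leq\omega_j-\gamma\,\omega_j^{1+1/\alpha}$ with $\gamma=\gamma(n,p,\Lambda,L,\lambda)>0$.

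For the inductive step, let $\mu_\pm$ denote the essential supremum and infimum of $u$ on $Q_j$ and branch according to the position of the phase-transition level $a$ with respect to $[\mu_-,\mu_+]$. In the non-degenerate alternative $\min(\mu_+-a,\,a-\mu_-)\leq\omega_j/4$, a selection $v\in\beta(u)+\mathcal L_h H_a(\beta(u))$ is locked in a single branch of the Heaviside graph on a slightly smaller subcylinder; the equation then reduces, through the bi-Lipschitz change of variables $\beta$, to a standard parabolic $p$-Laplace equation with measurable coefficients, and DiBenedetto's classical intrinsic-scaling oscillation decay yields $\osc_{Q_{j+1}}u\leq(1-\eta_0)\omega_j$ with $\eta_0=\eta_0(n,p,\Lambda)\in(0,1)$, a geometric reduction far stronger than required. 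In the phase-transition alternative $\min(\mu_+-a,\,a-\mu_-)>\omega_j/4$, the Heaviside jump is truly active, and after possibly replacing $u$ by $2a-u$ one may assume the measure-theoretic bound $|\{u>a\}\cap Q_j|\geq|Q_j|/2$.

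The heart of the proof is the critical alternative, where I would proceed in two sub-steps. First, a Caccioppoli estimate on the truncation $(a+\omega_j 2^{-k}-u)_+$ is combined with a DiBenedetto--Friedman logarithmic energy estimate: the latter exploits the favourable sign of the Stefan term across the coincidence set $\{u=a\}$, while the specific intrinsic time length $M\omega_j^{(2-p)(1+1/\alpha)}r_j^p$ from \eqref{cyl} is tuned so that the $p$-Laplacian diffusion and the latent-heat contribution balance at the scale $\omega_j$ and positivity can be propagated throughout the whole time interval. Second, a De~Giorgi iteration across the dyadic levels $a+\omega_j 2^{-k}$ produces a pointwise bound of the form $\inf_{Q_{j+1}}u\geq a+\omega_j 2^{-k_j}$ at a level $k_j$ dictated by the intrinsic scaling and the Sobolev embedding used inside the iteration; this gain translates exactly into the desired recursion $\omega_{j+1}\leq\omega_j-\gamma\,\omega_j^{1+1/\alpha}$, and eliminating the second logarithm present in \eqref{loglog} is precisely the reason for working with the $\omega_j$-dependent time length rather than the usual $r_j^p$.

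The main obstacle, and the source of the three regimes in \eqref{ratio.kappa}, is controlling the gain $2^{-k_j}$ in the critical alternative. It hinges on the Sobolev embedding available for the spatial gradient inside the De~Giorgi iteration: the subcritical regime $p<n$ uses $W^{1,p}\hookrightarrow L^{np/(n-p)}$, whose gain exponent propagates through the geometric iteration and yields $\alpha=p/(n+p)$; the borderline case $p=n$ admits any $\alpha<1/2$ since the corresponding embedding holds into every $L^q$ with $q<\infty$; and the supercritical regime $p>n$ benefits from Morrey's embedding into $L^\infty$ to give the sharp value $\alpha=1/2$. Interlacing this De~Giorgi iteration with the intrinsic scaling \eqref{cyl} so that the nested cylinders $Q_{j+1}\subset Q_j$ fit despite the $\omega_j$-dependent time length is the final technical point, and is precisely what forces the exponent $(2-p)(1+1/\alpha)$ in \eqref{cyl}.
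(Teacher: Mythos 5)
Your global architecture (induction on nested cylinders with the $\omega$-dependent time length, an alternative, De Giorgi iteration plus a logarithmic lemma, and the Sobolev exponent $\kappa$ dictating the three regimes of $\alpha$) matches the paper, but the core oscillation-reduction step contains gaps that would prevent the argument from closing. First, your dichotomy is set on the position of the level $a$ relative to $[\mu_-,\mu_+]$, and in the ``non-degenerate'' branch $\min(\mu_+-a,a-\mu_-)\leq\omega_j/4$ the selection is \emph{not} locked in a single branch of $H_a$ (the level $a$ still lies inside the range of $u$), so the equation does not reduce to a pure $p$-Laplace equation; moreover, even when it does, classical intrinsic-scaling decay is formulated on cylinders of length $\approx(\osc u)^{2-p}r^p$ and does not transfer to the much longer cylinders \eqref{cyl} --- the paper explicitly notes this and instead reruns its own two-alternative scheme with $\widetilde{{\mathcal L}_h}=0$. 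Second, and more seriously, your critical alternative starts from the measure information $|\{u>a\}\cap Q_j|\geq|Q_j|/2$ and claims a De Giorgi iteration over the levels $a+\omega_j2^{-k}$ yields $\inf_{Q_{j+1}}u\geq a+\omega_j2^{-k_j}$. A De Giorgi iteration needs the initial ``bad set'' to occupy a fraction below a critical threshold, which here must itself be of order $[\omega_j]^{1+1/\alpha}$ to absorb the $L^1$ latent-heat term in the energy estimate \eqref{caccioppoli estimate}; a fraction $1/2$ is nowhere near this, and bridging the gap by iterated measure-shrinking (De Giorgi's isoperimetric lemma) is exactly what reintroduces the second logarithm of \eqref{loglog}. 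In addition, pushing the infimum \emph{across} the phase-transition level $a$ in one step is precisely what the latent heat obstructs; it is not attainable and not needed.

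The paper's mechanism is different in two essential ways that you should incorporate. The alternative is measure-theoretic with an $\omega$-dependent smallness threshold: either the super-level set $\{v\geq\osc v/4\}$ occupies more than $\varepsilon_1[\omega(r)]^{1+\kappa/(\kappa-1)}$ of the cylinder (\eqref{alt.1}), or less (\eqref{alt.2}); it is this threshold, not the time length of the cylinder, that rebalances the latent heat (the time length only rebalances the $p$-Laplacian degeneracy). In the first case one truncates \emph{below} the jump (after normalising $b\in[\osc v/2,\osc v]$), obtaining a genuine supersolution of the $p$-Laplace equation, and applies the \emph{weak Harnack inequality} plus the decay-of-positivity estimate: this converts an arbitrarily small, $\omega$-quantified integral lower bound at one time slice into a pointwise lower bound up to time $T_r^{\omega(\cdot)}$, lifting the infimum by $\theta_1[\omega(r)]^{1+1/\alpha}$ without any smallness-of-measure requirement --- this is the device that discards one logarithm, and it is absent from your proposal. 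In the second case the small measure makes the De Giorgi iteration start below its critical threshold, the supremum drops by $\omega(r)/4$ near the end of the short time scale $\widetilde T_r^{\omega(\cdot)}$, and only then do the logarithmic lemma and a second De Giorgi iteration forward a reduction of size $[\omega(r)]^{1+1/\alpha}$ to the full time scale. Without these two ingredients your recursion $\osc_{Q_{j+1}}\leq\osc_{Q_j}-\gamma[\omega_j]^{1+1/\alpha}$ is not established.
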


\begin{remark}
By choosing $M$ large enough, it is rather straightforward to see that the above defined space-time cylinders 
$\overline  Q_r^{\omega(\cdot)} $ satisfy
\[
\overline  Q_{r_1}^{\omega(\cdot)}\subset \overline  Q_{r_2}^{\omega(\cdot)}\subset \overline  Q_{r_0}^{\omega(\cdot)}
\]
whenever $0<r_1\leq r_2\leq r_0$. Moreover, $r\mapsto \omega(r)$ is concave for $0<r\leq r_0$. For details, see Section~\ref{deriving}.
\end{remark}

\begin{remark}\label{blup}
{\rm Observe that, in the above theorem, $\alpha$ can be taken arbitrarily close to $1/2$ in the case $p=n$; however, the constants $c_\ast$ and $M$ in Theorem~\ref{interior.continuity} blow up as $\alpha\uparrow1/2$.}
\end{remark}

\subsection{Some notes about the proof.} We explain here, briefly and formally, the main ideas behind the continuity proofs, which can perhaps be blurred by the technical details. 

We shall work with approximate solutions $u_\eps$ and show ultimately that
\[
 \osc_{\overline Q_r^{\omega(\cdot)}} u_\eps \leq c_\ast\, \omega(r) \max\{\osc_{\Omega_T} u,1 \} +c\,\eps\,,
 \]
where  $\omega(\cdot)$ and $\overline Q_r^{\omega(\cdot)}$ are defined in \eqref{choice modulus}-\eqref{cyl}, $u_\eps$ is the solution to the approximating equation \eqref{eq.app} and $c$ does not depend on $\eps$. From this it will be easy to deduce Theorem \ref{interior.continuity} simply by taking the limit as $\eps\downarrow0$ and using the convergence of $u_\eps$ to $u$.

After fixing a cylinder $Q\equiv \overline Q_{r_0}^{\omega(\cdot)}$ as above, we can suppose, up to translation and rescaling, that $\sup u=\osc u \leq 1$ on $Q$ (we are omitting the $\eps$ for simplicity). Moreover, we can clearly suppose that $\osc u>\omega(r)$ and also that the jump is in the interval $[\osc u/2, \osc u]$ (note that if the jump is outside $[0,\osc u]$ there is nothing to prove, since we are dealing with the parabolic $p$-Laplace equation in $Q$), see subsection \ref{reductions}. Next we fix a  classical alternative: either $\sup u=\osc u$ is greater than $\omega(r)/4$ in a large portion of the cylinder $\widetilde Q_r^\om\subset Q$ \eqref{alt.1}, or this does not hold \eqref{alt.2}. Here, $\widetilde Q_r^\om$ is an appropriate cylinder, whose time-scale differs from that used for $\overline Q_r^{\omega(\cdot)}$. 

In the case that \eqref{alt.1} holds true, we truncate the solution below the jump, obtaining a weak supersolution to the parabolic $p$-Laplace equation, and we use the weak Harnack inequality, together with \eqref{alt.1} to lift up the infimum of $u$, therefore reducing the oscillation. Note that here we shall use  that the jump belongs to the interval $[\osc u/2, \osc u]$ in order to have enough room to make the truncation possible.

In the second case, we use Caccioppoli's inequality to perform a De Giorgi iteration, starting from \eqref{alt.2}. We have to use two tools in order to rebalance the high degeneracy of the problem, caused both by the jump (which produces an $L^1$ term on the right-hand side of the energy estimate, see \eqref{caccioppoli estimate}) and by the degeneracy of the $p$-Laplacian: the latter is rebalanced by the size of the cylinder, which depends on $\omega(r)$, see $\widetilde T_r^\om$ in  \eqref{choices T_r and T_r^1}, while the former is rebalanced by the fact that we introduce $\omega(r)$ in the size conditions of the alternatives, see again \eqref{alt.1}-\eqref{alt.2}. Notice that, in the case $p=2$, the cylinders we consider are the standard parabolic ones, $B_r(x_0)\times(t_0-M\,r^2,t_0)$, for a large but universal constant $M$; hence, for the logarithmic continuity for the classical Stefan problem \eqref{Stefan}, the trick essentially consists in rebalancing the presence of the jump with an alternative involving the modulus of continuity itself. Having reduced the supremum of $u$ on a part of the cylinder (see \eqref{friday}), using the time scale given by $\widetilde T_r^\om$, we forward this information in time using a logarithmic estimate and then perform another De Giorgi iteration, this time using the second time scale $T_r^\om$ in \eqref{choices T_r and T_r^1} to rebalance the eventual degeneracy due to the $p$-Laplacian operator. Considering the two alternatives, we see that the choice of the modulus of continuity \eqref{modulus_of_continuity} is the correct one, allowing to merge the two different options and to make the iteration scheme work, see Section \ref{deriving}. 

Finally, we would like to highlight the points of contact of our paper with the recent work \cite{KMN}, where sharp continuity results are proved for obstacle problems involving the evolutionary $p$-Laplacian operator. There, it is shown that, once considering obstacles with modulus of continuity $\omega(\cdot)$ (where here this expression has to be understood in an appropriate, intrinsic way), the solution has the same regularity, in the sense that it has the same modulus of continuity. In order to get such result, the authors have to deal with particular cylinders of the form (take as the center the origin, for simplicity)
\[
Q^{\lambda\omega(\cdot)}_r:= B_r \times ( - [\lambda \omega(r)]^{2-p}r^p, 0), \quad\text{ with }\quad \lambda\approx\frac{\osc_{Q^{\lambda\omega(\cdot)}_r}u}{\omega(r)}
\]
and where $u$ is the solution they are considering; these cylinders are the ones involved also in the intrinsic definition of the modulus of continuity and they allow to rebalance the inhomogeneity of the problem. This is an extension of DiBenedetto's approach to regularity for the parabolic $p$-Laplacian, see~\cite{AM07, DiBe93, Urba08}, where results are recovered as extremal cases of a family of general interpolative intrinsic geometries. Notice the similitude with the cylinders defined in \eqref{cyl} and the fact that we also have to deal with the further inhomogeneity given by the jump; this precisely reflects in the  presence of the exponent $1+1/\alpha$. 

\subsection{Notation}
Our notation will be mostly self-explanatory; we mention here some noticeable facts. We shall follow the usual convention of denoting by $c$ a generic constant, always greater or equal than one, that may vary from line to line; constants we shall need to recall will be denoted with special symbols, such as $\tilde c, c_\ast,c_1$ or the like. Dependencies of constants will be emphasised between parentheses: $c(n,p,\Lambda)$ will mean that $c$ depends only on $n,p,\Lambda$;  they will often be indicated just after displays. The dependence of constants upon $\alpha$ (and on $\kappa$, see \eqref{eq:kappa}) will be meaningful only in the case $p=n$; in the case $p<n$ this would just add a dependence on $n,p$ -- see also Remark \ref{blup}. Unless otherwise stated, we shall avoid to indicate the centre of the ball when it will be the zero vector: $B_r:=B_r(0)$.

Being $A \in \R^k$ a measurable set with positive measure and $f:A \to \R^m$ an integrable map,  with $k,m \ge 1$, we shall denote with ${(f)}_A$ the averaged integral
\begin{equation*}
      {(f)}_{A}:=\mean{A} f(\xi)\, d\xi := \frac{1}{|A|} \int_{A} f(\xi)\, d\xi	\,.
\end{equation*}
We stress that with the statement ``a vector field with the same structure as $\mathcal A$'' (or ``structurally similar to $\mathcal A$'', or similar expressions) we shall mean that the vector field $\mathcal A$ satisfies \eqref{p.laplacian.assumptions}, possibly with $\Lambda$ replaced by a constant depending only on $n,p$ and $\Lambda$, and continuous with respect to the last two variables. 

Finally, by $\ln\ln x$, for $x>1$, we will mean $\ln(\ln x)$; $\N$ will be the set $\{1,2,\dots\}$, while $\N_0:=\N\cup\{0\}$; $\R^+:=[0,\infty)$.

\section{Collecting tools}

\subsection{Approximation of the problem}\label{sec: approx}
Let $\rho_\eps$ be the standard symmetric, positive one dimensional mollifier supported in $(-\eps,\eps)$. Set
\begin{equation*}
H_{a,\eps}(s) := (\rho_\eps \ast H_a)(s)\qquad\text{for $s\in\R$}\,;
\end{equation*}
then $H_{a,\eps}$ is smooth. Moreover, the support of $H_{a,\eps}'$ is contained in $(a-\eps,a+\eps)$. 
Let $\{u_\eps\}$ be a sequence converging locally uniformly to $u$ as $\eps\downarrow0$, where $u_\eps$ is a weak solution to the approximate equation
\begin{equation}\label{eq.app}
\partial_t \big[\beta(u_\eps)+ {\mathcal L}_h\, H_{a,\eps}(\beta (u_\eps))\big] - \mathrm{div}\, \mathcal{A}(x,t,u_\eps,Du_\eps) = 0\qquad\text{in $\Omega_T$}.  
\end{equation}
Now, setting
\begin{equation}\label{w}
w := \beta(u_\eps)\,, 
\end{equation}
we arrive at the  regularized equation
\begin{equation} \label{eq:mollified eq}
\partial_t w -  \mathrm{div}\, \widetilde{\mathcal{A}}(x,t,w,Dw) = -{\mathcal L}_h\, \partial_t H_{a,\eps}(w)\,,
\end{equation}
where
\begin{equation*} 
\widetilde{\mathcal{A}}(x,t,w,Dw):= \mathcal{A}(x,t,\beta^{-1}(w), [\beta^\prime(\beta^{-1}(w))]^{-1} Dw)\,.
\end{equation*}
Observe that the growth and ellipticity bounds for $\widetilde{\mathcal{A}}$ are inherited from $\mathcal{A}$ and from the two-sided bound for 
$\beta'$: indeed, we in particular get that
\begin{equation}\label{tilde inf}
|\widetilde{ \mathcal{A}}(x,t,u,\xi)| \leq \Lambda^p |\xi|^{p-1} \,, \qquad \langle \widetilde{\mathcal{A}}(x,t,u,\xi) , \xi \rangle \geq \Lambda^{-p}
|\xi|^p
\end{equation}
for almost every $(x,t) \in \Omega_T$ and for all $(u,\xi) \in \mathbb{R} \times  \mathbb{R}^n$. Moreover, 
$\widetilde{\mathcal{A}}$ is clearly continuous with respect to the last two variables since $\beta$ is $C^1$-diffeomorphism. Note that we dropped $\eps$ from the notation; it will be recovered in \eqref{recover epsilon}.

By regularity theory for evolutionary $p$-Laplace type equations, see~\cite{DiBe93, Urba08}, we actually have that the solution $w$ is H\"older 
continuous since $\beta(u_\eps)+{\mathcal L}_h H_{a,\eps}(\beta (u_\eps))$ is a diffeomorphism. However, this kind of regularity depends on the regularization and, in 
particular, will deteriorate as $\eps \downarrow 0$. Nonetheless, we may assume that the solution  $w$ to the regularized equation is continuous having pointwise values.

\subsection{Scaling of the equation}\label{scaling}
Once given a function $z$ solving \eqref{eq.app} or \eqref{eq:mollified eq} in $B_r(x_0)\times(t_0- \lambda^{2-p}\overline T,t_0)$, for some $\overline T>0,\lambda\geq1$, if we consider the function 
\[
\bar z(y,s) := \lambda^{-1} z(x_0+y,t_0- \lambda^{2-p}(T_0+\overline T)+ \lambda^{2-p} s),\quad (y,s)\in B_r\times(T_0,T_0+\overline T)\,,
\]
it is easy to see that $\bar z$ solves an equation which is structurally similar to the one solved by $z$, but with a multiplier $\lambda^{-1} {\mathcal L}_h \in [0,1]$ for the phase-transition term. 

\subsection{Space-time geometry}\label{spgeo}
We set
\begin{equation} \label{choices T_r and T_r^1}
\widetilde T_r^{\omega(\cdot)} := [ \omega(r) ]^{2-p} r^p \qquad \mbox{and} \qquad T_r^{\omega(\cdot)} := M[ \omega(r) ]^{(2-p)(1+1/\alpha)} r^p \,,
\end{equation}
for the modulus of continuity $\omega$ defined in \eqref{choice modulus}, with $L\geq1$ and $M\geq2$ to be fixed; we also set 
\begin{equation*}
\widetilde Q_r^\om = B_{r/4} \times \big(0,\widetilde T_r^\om\big)\qquad\text{and}\qquad Q_r^{\omega(\cdot)}:=B_r\times \big(0,T_r^{\om}\big)\,.
\end{equation*}
We stress that up to Section \ref{deriving} it will be sufficient to think of $\omega$ simply as a generic concave modulus of continuity, such that the maps $r\mapsto T_r^{\omega(\cdot)}$ and $r\mapsto \widetilde T_r^{\omega(\cdot)}$ are monotone increasing, i.e., 
\[
0<r_1\leq r_2\qquad\Longleftrightarrow\qquad 0<T_{r_1}^\om\leq T_{r_2}^\om\quad\text{and}\quad0<\widetilde{T}_{r_1}^\om\leq \widetilde{T}_{r_2}^\om\,.
\]
Notice, on the other hand, that we still have not chosen the value of $L$. The explicit expression in \eqref{choice modulus} will indeed be used only in Section \ref{deriving} when iterating the reduction of oscillation obtained in the forthcoming Section \ref{reducing}, in order to obtain \eqref{oscillation.interior}. Needless to say, the choice in \eqref{Elle} will imply that time scales are monotone, in the above sense.

\subsection{Energy estimates}
We consider in this subsection continuous weak solutions to the following equation
\begin{equation}\label{eq.prova}
\partial_t v - \mathrm{div} \, \widetilde{\mathcal{A}}(x,t,v,Dv)  = - \widetilde{{\mathcal L}_h} \, \partial_t H_{b,\eps}(v)\,,
\end{equation}
where $\widetilde{\mathcal A}$ has the same structure of $\mathcal A$, $b\in\R$, and $\widetilde{{\mathcal L}_h}  \in [0,1]$; we shall, in particular, use the next results for equation \eqref{the equation for v}, with $b$ defined in \eqref{v}. The following is a Caccioppoli's inequality for \eqref{eq.prova}; for ease of notation we shall denote, from now on,
\begin{equation}\label{defHHH}
\mathcal H(s):= s + \widetilde{{\mathcal L}_h} \, H_{b,\eps}(s)\,,\qquad s\in\R\,.
\end{equation}

\begin{lemma}\label{caccioppoli lemma}
There exists a constant $c$, depending only on $n,p$ and $\Lambda$, such that, if $v$ is a solution to \eqref{eq.prova} in a cylinder $Q=B \times \Gamma$, then
\begin{align}\label{caccioppoli estimate}
& \sup_{\tau\in \Gamma}  \frac{\widetilde{{\mathcal L}_h}}{|\Gamma|} \mean{B} \Bigl[\int_{k}^{v} H_{b,\eps}'(\xi)  (\xi-k)_+ \, d\xi  \, \phi^p\Bigr](\cdot,\tau) \dx\notag
\\ \notag & \qquad  + \sup_{\tau\in \Gamma}  \frac{1}{|\Gamma|} \mean{B}\big[(v-k)_+^2 \phi^p\bigr](\cdot,\tau) \dx + \mean{Q} \big| D[ (v-k)_+ \phi ]\big|^p \dx\dt
\\ & \qquad \qquad \leq c \, \mean{Q}\Big[ (v-k)_+^p |D\phi|^p + (v-k)_+^2 \left( \partial_t \phi^p\right)_+  \Big]\dx \dt \notag
\\ & \qquad  \qquad \qquad \quad + c \, \widetilde{{\mathcal L}_h}\,   \mean{Q}    \int_{k}^v H_{b,\eps}'(\xi)  (\xi-k)_+ \, d\xi  \left( \partial_t \phi^p\right)_+ \dx \dt
\end{align}
for any $k\in\R$ and any test function $\phi\in C^\infty(Q)$, such that $(v-k)_+ \phi^p$ vanishes on the parabolic boundary of $Q$.
\end{lemma}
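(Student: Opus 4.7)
The plan is to test the weak formulation of \eqref{eq.prova} with $\varphi := (v-k)_+\,\phi^p$, using a standard Steklov averaging to justify the manipulation of the time derivative and passing to the limit at the end. The critical algebraic step is to observe that if one sets
\[
\Psi_k(s) := \int_k^s H_{b,\eps}'(\xi)(\xi-k)_+ \,d\xi,\qquad s\in\R,
\]
then $\Psi_k\geq 0$, $\Psi_k$ vanishes on $\{s\leq k\}$, and $\Psi_k'(s) = H_{b,\eps}'(s)(s-k)_+$. Consequently $\partial_t\Psi_k(v) = \partial_t H_{b,\eps}(v)\,(v-k)_+$, so the jump contribution on the right-hand side of \eqref{eq.prova}, once multiplied by $(v-k)_+\phi^p$, becomes $-\widetilde{{\mathcal L}_h}\,\partial_t\Psi_k(v)\,\phi^p$, which integrates by parts in time exactly in parallel with the classical parabolic term $\tfrac12\partial_t(v-k)_+^2\,\phi^p$ arising from $(\partial_t v)(v-k)_+$.

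I would next integrate over $B\times(t_0,\tau)$ for arbitrary $\tau\in\Gamma$, with $t_0:=\inf\Gamma$. The boundary contributions at $t_0$ vanish because $\phi(\cdot,t_0)\equiv 0$ on $B$ (since $(v-k)_+\phi^p$ is assumed to vanish on the parabolic boundary of $Q$), and $\Psi_k(v)\phi^p$ vanishes there for the same reason, as $\Psi_k(v)$ is supported on $\{v>k\}$ where $(v-k)_+\phi$ also vanishes. Moving the time derivative onto $\phi^p$ and then taking the supremum over $\tau\in\Gamma$ on the resulting pointwise-in-time integrals produces the first two terms on the left of \eqref{caccioppoli estimate}, while the interior time contributions are bounded using $(\partial_t\phi^p)_+$ and yield the last two terms on the right.

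For the elliptic part, I would split
\[
D\big[(v-k)_+\phi^p\big] = \chi_{\{v>k\}}Dv\,\phi^p + p\,(v-k)_+\,\phi^{p-1}\,D\phi,
\]
use the ellipticity lower bound in \eqref{tilde inf} on the first piece to produce the good term $\Lambda^{-p}\int|D(v-k)_+|^p\phi^p$, and use the growth upper bound together with Young's inequality (exponents $p/(p-1)$ and $p$) on the second, generating a small $\delta|D(v-k)_+|^p\phi^p$ that is reabsorbed plus the desired $(v-k)_+^p|D\phi|^p$ contribution. The elementary estimate $|D((v-k)_+\phi)|^p\leq c\,(|D(v-k)_+|^p\phi^p + (v-k)_+^p|D\phi|^p)$ then upgrades the gradient term to the form appearing in \eqref{caccioppoli estimate}, and dividing through by $|Q|$ matches the averaging conventions on both sides.

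The main technical obstacle is making the chain-rule identity $\partial_t\Psi_k(v) = \partial_t H_{b,\eps}(v)(v-k)_+$ rigorous via Steklov averages, since the averaged equation features $(\partial_t H_{b,\eps}(v))_h$ rather than $\partial_t H_{b,\eps}(v_h)$, and one must show that these two quantities give the same contribution in the limit $h\downarrow 0$. The smoothness of $H_{b,\eps}$, in particular the continuity and boundedness of $H_{b,\eps}'$, renders this passage routine, while the monotonicity and nonnegativity of $\Psi_k$ ensure stability when taking the supremum in time.
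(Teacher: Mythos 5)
Your argument is correct and follows essentially the same route as the paper: testing with $(v-k)_+\phi^p$ cut off in time, rewriting the parabolic and jump contributions via the primitive $\int_k^v \mathcal H'(\xi)(\xi-k)_+\,d\xi$ (the paper bundles $v$ and $\widetilde{{\mathcal L}_h}H_{b,\eps}(v)$ into $\mathcal H$, you treat the two pieces separately, which is the identical computation), integrating by parts in time, and handling the elliptic part by the standard ellipticity/Young absorption. The Steklov-average subtlety you flag is exactly the one the paper addresses by the mollification argument in the proof of Lemma \ref{the f-lemma}, so no gap remains.
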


\begin{proof}
In order to get \eqref{caccioppoli estimate}, we test, in the weak formulation of \eqref{eq.prova}, with $(v-k)_+ \phi^p\chi_{\Gamma\cap(-\infty,\tau)}$ for $\tau\in\Gamma$. The calculations are standard; we only show here how to formally treat the parabolic term (see also the proof of Lemma \ref{the f-lemma}): being $\hat Q:=Q\cap [B\times(-\infty,\tau)]$,
\begin{multline*}
\int_{\hat Q} \partial_t v \mathcal H' (v) (v-k)_+ \phi^p \dx \dt  =  \int_{\hat Q} \partial_t \left[ \int_k^v \mathcal H^\prime (\xi) (\xi-k)_+ \, d\xi \right]\phi^p \dx \dt \\
= \int_{B} \int_k^{v(\cdot,\tau)} \mathcal H^\prime (\xi) (\xi-k)_+ \, d\xi \,  \phi^p \dx- \int_{\hat Q} \int_k^v \mathcal H^\prime (\xi) (\xi-k)_+ \, d\xi \,  \partial_t \phi^p \dx \dt \,.
\end{multline*}
\end{proof}

The next lemma allows to forward information in time. The result in the case of evolutionary $p$-Laplace type equations is a standard ``Logarithmic Lemma", see for example the proof in~\cite[Chapter II]{DiBe93}.

\begin{lemma}\label{lemma: log lemma}
Let $\overline T\in(0,T_r^\om)$, for $T_r^\om$ as in \eqref{choices T_r and T_r^1}. Suppose that $v \in C(\overline{
Q_r^\om})$ solves~\eqref{eq.prova} in $Q_r^\om$ and
\[
v(x,\overline T) \leq \osc v - \frac{\omega(r)}{4}, \qquad \forall \, x \in B_{r/8}\,;
\]
let moreover $\nu^\ast\in(0,1)$. Then there exists a constant $\varsigma \in (0,1/2)$, depending only on  $n,p,\Lambda,M$ and $\nu^*$, such that, if $\hat Q := B_{r/16} \times (\overline T,T_r^\om)$, then
\begin{equation}\label{log.general}
\frac{\Big|\hat Q \cap \left\{ v \geq \osc v - \varsigma [\omega (r)]^{1+1/\alpha} \Big\} \right| }{|\hat Q| } \leq  \nu^\ast .
\end{equation}
\end{lemma}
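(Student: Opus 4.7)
The proof is a logarithmic estimate in the spirit of DiBenedetto's classical treatment of parabolic $p$-Laplace equations, carefully tuned so that the Stefan jump cooperates with the $p$-degenerate diffusion. Let me write $\mu^+:=\sup_{Q_r^\om}v$ and normalize so that the hypothesis reads $v(x,\bar T)\leq\mu^+-\omega(r)/4$ on $B_{r/8}$. The plan is to pick the parameters
$$k:=\mu^+-\tfrac{\omega(r)}{4},\qquad H:=\tfrac{\omega(r)}{4},\qquad c:=\varsigma[\omega(r)]^{1+1/\alpha},$$
with $\varsigma\in(0,1/2)$ to be chosen depending only on $n,p,\Lambda,M,\nu^*$; a spatial cutoff $\zeta\in C_c^\infty(B_{r/8})$ with $\zeta\equiv 1$ on $B_{r/16}$ and $|D\zeta|\leq C/r$; and the truncated logarithm
$$\psi(s):=\left[\ln\frac{H}{H+c-(s-k)_+}\right]_+,$$
capped at $\psi_{\max}=\ln(H/c)$. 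The key pointwise identities are $\psi(s)=0$ for $s\leq k+c$, $\psi'=e^\psi/H$, and $\psi''=(\psi')^2$ on the growing region.

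The core step will be to test the weak formulation of \eqref{eq.prova}, rewritten as $\partial_t\mathcal{H}(v)=\divergence\widetilde{\mathcal A}$ with $\mathcal H$ from \eqref{defHHH}, against $\psi(v)\psi'(v)\zeta^p\chi_{(\bar T,t)}(s)$ for an arbitrary $t\in(\bar T,T_r^\om)$. The parabolic term integrates by parts in time exactly as in the proof of Lemma~\ref{caccioppoli lemma}, producing $\int F(v(x,t))\zeta^p\,dx - \int F(v(x,\bar T))\zeta^p\,dx$ with $F(v):=\int_k^v\mathcal H'(\xi)\psi(\xi)\psi'(\xi)\,d\xi\geq\tfrac12\psi^2(v)$; here the nonnegative Stefan contribution to $F$ only helps, and the initial-time term vanishes since $v(\cdot,\bar T)\leq k$ on $\mathrm{supp}\,\zeta\subset B_{r/8}$. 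For the elliptic term, the identity $\psi''=(\psi')^2$ produces a principal contribution $\geq\Lambda^{-p}\int\zeta^p|Dv|^p(\psi')^2(1+\psi)$, while the cross term $p\int\zeta^{p-1}\psi\psi'|Dv|^{p-1}|D\zeta|$ is absorbed via Young's inequality (with conjugate exponents $p,p/(p-1)$) at the cost of an error $\leq C\int|D\zeta|^p\psi^p(\psi')^{2-p}(1+\psi)^{-(p-1)}\,dx\,dt\leq C\int|D\zeta|^p\psi(\psi')^{2-p}\,dx\,dt$. Exploiting $\psi(\psi')^{2-p}=\psi e^{-(p-2)\psi}H^{p-2}\leq C(p)H^{p-2}$, I obtain the central estimate
$$\int_{B_{r/16}}\psi^2(v)(x,t)\,dx\leq C(n,p,\Lambda)\,H^{p-2}r^{n-p}(t-\bar T)\qquad\text{for all }t\in(\bar T,T_r^\om).$$

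The final step is a Chebyshev-type measure bound. On the super-level slice $E_t:=\{x\in B_{r/16}:v(x,t)\geq\mu^+-\varsigma[\omega(r)]^{1+1/\alpha}\}$ one has $(v-k)_+\geq H-c$, hence $\psi\geq\psi_0:=\ln(H/(2c))$, which is at least $(1/\alpha)\ln(1/\omega(r))+\ln(1/(8\varsigma))$ and can be made as large as required by choosing $\varsigma$ small. Dividing, integrating over $t\in(\bar T,T_r^\om)$, and comparing with $|\hat Q|=|B_{r/16}|(T_r^\om-\bar T)$ gives
$$\frac{|\hat Q\cap\{v\geq\mu^+-\varsigma[\omega(r)]^{1+1/\alpha}\}|}{|\hat Q|}\leq\frac{CH^{p-2}(T_r^\om-\bar T)}{r^p\psi_0^2}.$$
Inserting $T_r^\om/r^p=M[\omega(r)]^{(2-p)(1+1/\alpha)}$ and $H^{p-2}\leq[\omega(r)]^{p-2}$, one sees that the very exponent $1+1/\alpha$ in \eqref{choices T_r and T_r^1} is what makes the $\omega(r)$-dependencies balance, leaving a bound of the form $CM[\omega(r)]^{-(p-2)/\alpha}/\psi_0^2$ which is then forced below $\nu^*$ by a suitable small choice of $\varsigma$ (and, when $p>n$, by invoking the intrinsic rescaling of Section~\ref{scaling} to normalize $\omega(r)$ to order one). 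The main obstacle throughout is precisely this delicate calibration of $\varsigma,H,c$ and the exponent $1+1/\alpha$: one must simultaneously (i) have $v(\cdot,\bar T)\leq k$ on $B_{r/8}$ force $\psi\equiv 0$ initially, (ii) have $\psi_0$ large on the super-level set, and (iii) keep the error term $\psi(\psi')^{2-p}$ uniformly bounded in terms of $H^{p-2}$, all of which conspire to fix the exponent at $1+1/\alpha$.
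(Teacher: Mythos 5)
Your overall scheme -- a truncated logarithmic test function $\psi\psi'\zeta^p$, vanishing initial term thanks to the hypothesis at time $\overline T$, and a Chebyshev bound on the superlevel set -- is indeed the paper's strategy. But there is a genuine quantitative gap in your choice of the height $H$ of the logarithm, and it breaks the proof for every $p>2$. You set $k=\mu^+-\omega(r)/4$ and $H=\omega(r)/4$, so the degenerate factor coming from the error term is $H^{p-2}=[\omega(r)/4]^{p-2}$. Combined with $T_r^{\omega(\cdot)}/r^p=M[\omega(r)]^{(2-p)(1+1/\alpha)}$, your final bound is, as you yourself compute, of order $M[\omega(r)]^{(p-2)-(p-2)(1+1/\alpha)}/\psi_0^2=M[\omega(r)]^{-(p-2)/\alpha}/\psi_0^2$. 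The numerator is a negative power of $\omega(r)$, hence blows up as $r\downarrow0$, while $\psi_0^2$ grows only like $(\ln(1/\omega(r)))^2$; a power always beats a squared logarithm. Since $\varsigma$ is required to depend only on $n,p,\Lambda,M,\nu^*$ -- it cannot depend on $r$ or on $\omega(r)$ -- no choice of $\varsigma$ forces this quantity below $\nu^*$ uniformly in $r$. The appeal to the rescaling of Subsection 2.2 does not help either: that rescaling normalizes the amplitude of the solution, not the size of $\omega(r)$, and the lemma must hold for all small $r$.

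The fix is to truncate much closer to the supremum: take $k=\osc v-S^+$ with $S^+=[\omega(r)]^{1+1/\alpha}/8$ (so the logarithm has height $S^+$, not $\omega(r)/4$), and lower cutoff at $\varsigma S^+$. The hypothesis $v(\cdot,\overline T)\leq\osc v-\omega(r)/4$ still kills the initial term because $S^+\leq\omega(r)/4$ implies $v(\cdot,\overline T)\leq k$ on $B_{r/8}$; and now the degenerate factor is $(S^+)^{p-2}$, so that $r^{-p}T_r^{\omega(\cdot)}(2S^+)^{p-2}=4^{2-p}M$ is a pure constant -- this, not your calibration, is what the exponent $1+1/\alpha$ in \eqref{choices T_r and T_r^1} is designed to achieve. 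The resulting bound is $cM\ln(1/\varsigma)/(\ln(1/(2\varsigma)))^2$, which tends to $0$ as $\varsigma\downarrow0$ independently of $r$, and the choice of $\varsigma$ is legitimate. (A secondary slip: your claim $\psi(\psi')^{2-p}\leq C(p)H^{p-2}$ fails at $p=2$, where the left-hand side equals $\psi\leq\psi_{\max}$; with the corrected truncation this is harmless, since $\psi_{\max}=\ln(1/\varsigma)$ is independent of $r$.)
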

\begin{proof}
Denote, in short, $\widetilde{\mathcal{A}}(Dv):=\widetilde{\mathcal{A}}(x,t,v,Dv)$ and recall the definition of $\mathcal H$ in \eqref{defHHH}. Consider a time independent cut-off function $\phi \in C_0^\infty(B_r)$, $0\leq \phi \leq 1$, such that
$$\phi \equiv 1 \quad \textrm{in} \ B_{r/16}  \quad \textrm{and} \quad \phi = 0 \quad \textrm{on} \ \partial B_{r/8} \quad \textrm{with} \quad  \left| 
D \phi \right| \leq 32/r\,.$$
Take
$$0<S^+: = \frac{\left[ \omega (r) \right]^{1+1/\alpha} }{8} \leq \frac{\omega(r)}{4} \qquad \textrm{and} \qquad k = \osc v - S^+,$$
and define the logarithmic function
$$\varPsi(v) = \biggl[\ln \left( \frac{S^+}{S^+ - (v-k)_+ + \varsigma S^+} \right)\biggr]_+, \quad \varsigma \in (0,1/2) \ \ \textrm{to be fixed.}$$
We only have $\varPsi(v) \neq 0$ when
$$S^+ > S^+ - (v-k)_+ + \varsigma S^+\quad \Longleftrightarrow\quad v > \osc v -  \frac{1 - \varsigma}{8} \left[ \omega (r) \right]^{1+1/\alpha} =: v_-   \,.$$
Note, in particular, that $v_- > \osc v - \omega(r)/4$ and that $v_--k=\varsigma S^+$. We have, formally,
$$\varPsi^\prime (v) = \chi_{\{ v > v_-\}} \frac{1}{S^+ - (v-k)_+ + \varsigma S^+}$$
and
\begin{eqnarray*}
\varPsi^{\prime \prime} (v) & = & \delta_{v-v_-} \frac{1}{S^+ - (v-k)_+ + \varsigma S^+} + \chi_{\{ v > v_-\}} \frac{1}{(S^+ - (v-k)_+ + \varsigma S^+)^2}
\\
& = &  \delta_{v-v_-} \frac{1}{S^+} + \chi_{\{ v > v_-\}} \frac{1}{(S^+ - (v-k)_+ + \varsigma S^+)^2}=  \frac{\delta_{v-v_-}}{S^+} +\big[\varPsi^{\prime } (v)\big]^2,
\end{eqnarray*}
where $\delta_{v-v_-}$ is the Dirac delta centered in $v-v_-$. Testing formally the equation with $\eta = \varPsi^\prime (v) \varPsi(v) \phi^p\chi_{(\overline T, \tau)}(t)$, for $\tau \in (\overline{T},T_r^\om]$, we have
\[
- \int_{B_{r/8}\times(\overline T, \tau)}  \langle\widetilde{\mathcal{A}}(Dv), D \eta \rangle \dx \dt = \int_{B_{r/8}\times(\overline T, \tau)}  \partial_t \mathcal H(v) \eta \dx\dt\,.
\]
The choice of the test function is admissible  after  a suitable mollification in time, following the same steps as in the end of the proof of Lemma~\ref{the f-lemma}, when treating the first integral. For the time term, we have
\[
\partial_t \mathcal{H}(v) \varPsi^\prime (v) \varPsi(v) = \partial_t \int_{v_-}^v \mathcal{H}^\prime(\xi) \varPsi^\prime (\xi) \varPsi(\xi) \, d\xi
 \]
and integration by parts gives  that
\[
\int_{B_{r/8}\times(\overline T, \tau)} \partial_t \mathcal{H}(v) \varPsi^\prime (v) \varPsi(v) \phi^p \dx \dt = \int_{B_{r/8}} \int_{v_-}^{v(\cdot,\tau)}
\mathcal{H}^\prime(\xi) \varPsi^\prime (\xi) \varPsi(\xi) \, d\xi \ \phi^p \dx \bigg|_{t=\overline T}^{\tau} \,,
\]
since $\phi$ is time independent; here, we have also used the fact that $v \in C(\overline {Q_r^\om})$. Since $v \leq v_-$ on $B_{r/8} \times \{\overline T\}$, we have that
\[
 \int_{B_{r/8}} \int_{v_-}^{v(\cdot,\overline T)} \mathcal{H}^\prime(\xi) \varPsi^\prime (\xi) \varPsi(\xi) \, d\xi\  \phi^p \dx = 0\,.
\]
Therefore
\[
\int_{B_{r/8}\times(\overline T,\tau)} \partial_t \mathcal{H}(v) \varPsi^\prime (v) \varPsi(v) \phi^p \dx \dt  =  \int_{B_{r/8}} \int_{v_-}^{v(\cdot,\tau)}
\mathcal{H}^\prime(\xi) \varPsi^\prime (\xi) \varPsi(\xi) \, d\xi\ \phi^p \dx
\]
and since $\mathcal H^\prime \geq 1$ and  $\varPsi(v_-)=0$, we obtain that
\[
\int_{B_{r/8}} \varPsi^2 (v(x,\tau))  \phi^p \dx \leq 2 \int_{B_{r/8}\times(\overline T,\tau)} \partial_t \mathcal{H}(v) \varPsi^\prime (v) \varPsi(v) \phi^p \,
dx \dt.
\]
As for the elliptic term, we get, from \eqref{tilde inf}, since $\varPsi(v)\delta_{v-v_-}=0$,
\begin{align*}
-\int_{B_{r/8}\times(\overline T,\tau)}\langle\widetilde{\mathcal{A}}(Dv), D\eta \rangle &\dx \dt  =- \int_{B_{r/8}\times(\overline T,\tau)}\langle\widetilde{\mathcal{A}}(Dv),D\phi^p\rangle  \, \varPsi^\prime(v) \varPsi(v)  \dx \dt\\
&\quad- \int_{B_{r/8}\times(\overline T,\tau)}\langle\widetilde{\mathcal{A}}(Dv),Dv\rangle \big(1+\varPsi(v)\big) \left[ \varPsi^\prime(v) \right]^2 \phi^p  \dx \dt \\
&\leq c(p,\Lambda)  \int_{B_{r/8}\times(0,T_r^\om)}  \varPsi(v) \left[ \varPsi^\prime(v) \right]^{2-p} |D\phi|^p \dx \dt\\
&\quad -c(p,\Lambda)\int_{B_r\times(\overline T,\tau)} |Dv|^p (1+\varPsi(v)) \left[ \varPsi^\prime(v) \right]^2 \phi^p  \dx \dt\,,
\end{align*}
using Young's inequality. We thus obtain, discarding the negative term on the right-hand side,
\[
\int_{B_{r/8}} \varPsi^2 (v(\cdot,\tau))  \phi^p \dx \leq c  \int_{Q_r^\om}   \varPsi(v) \left[ \varPsi^\prime(v) \right]^{2-p}|D\phi|^p \dx \dt\,;
\]
this holds for all $\tau \in (\overline T, T_r^\om]$.  The very definitions of $\varPsi$ and $T_r^\om$ then imply
\begin{eqnarray*}
\int_{B_{r/16}} \left[ \varPsi(v(\cdot,\tau)) \right]^2 \dx & \leq &  c  \, \frac{| B_{r/8} |\,T_r^\om}{r^p} \ln \frac{1}{\varsigma} \ (2S^+)^{p-2}
\\ & \leq &  c \, M | B_{r/16} | \ln \frac{1}{\varsigma}\,,
\end{eqnarray*}
since $(v-k)_+\leq S^+$ and
\[
r^{-p} \, T_r^\om \, (2S^+)^{p-2} = 2^{p-2}  M [ \omega(r) ]^{(2-p)(1+1/\alpha)} \, \left( \frac{\left[ \omega (r) 
\right]^{1+1/\alpha} }{8} \right)^{p-2} = 4^{2-p}M\,.
\]
Moreover, the left-hand side can be bounded below as
$$\int_{B_{r/16}} \left[ \varPsi(v(\cdot,\tau)) \right]^2 \dx \geq \left| B_{r/16} \cap \left\{ v(\cdot , \tau ) \geq \osc v - \varsigma S^+ \right\} 
\right| \left( \ln \frac{1}{2\varsigma} \right)^2$$
and we conclude, recalling the definition of $S^+$, that
\[
\frac{\left| B_{r/16} \cap \left\{ v(\cdot , \tau ) \geq \osc v - \varsigma [ \omega (r) ]^{1+1/\alpha} \right\} \right| }{| B_{r/16}  | } \leq cM\, \frac{\ln \frac{1}{\varsigma}}{\, \ln \frac{1}{2\varsigma}\,} = \nu^\ast\,,
\]
for a convenient choice of $\varsigma$. Finally, integrate in time to obtain \eqref{log.general} and complete the proof.
\end{proof}

\subsection{Supersolutions of evolutionary $p$-Laplace equations}

We recall that a weak supersolution to 
\begin{equation}\label{ssol}
 \partial_t v - \mathrm{div} \, \hat{\mathcal{A}}(x,t,v,Dv)= 0\qquad \text{in $B\times \Gamma$}\,,
\end{equation}
$B$ open set and $\Gamma$ open interval, where $\hat {\mathcal A}$ has the same structure of $\widetilde{\mathcal A}$ (and $\mathcal A$), is a function $w\in V^{2,p}(B\times \Gamma)$ satisfying
\[
\int_{\mathcal K}[ w\,\varphi](\cdot,\tau) \dx\biggr|_{\tau=t_1}^{t_2} +\int_{\mathcal K\times[t_1,t_2]} \big[-w\, \partial_t\varphi  +\langle \mathcal{A}(\cdot, \cdot, w,Dw) ,D\varphi\rangle\big]\dx\dt\geq0
\]
for all $\mathcal K\Subset B$, almost every $t_1,t_2\in\R$ such that $[t_1,t_2]\Subset\Gamma$ and for every test function $\varphi \in L^p_{\loc}(\Gamma;W^{1,p}_0(\mathcal K))$ such that $\partial_t\varphi\in L^2(\mathcal K\times[t_1,t_2])$ and $\varphi\geq0$. Analogously, $w$ is a weak subsolution if the quantity on the left-hand side in \eqref{ssol} is non-positive for any such test function. The following simple lemma is one of the keys in our proof of the interior continuity. 

\begin{lemma} \label{the f-lemma}
If $k<b-\eps $ and $v$ is a weak solution of \eqref{eq.prova} in $Q_r^\om$, then $(k-v)_+$ is a weak subsolution and $\min (k,v) = k - (k-v)_+$ is a weak 
supersolution of \eqref{ssol} in $Q_r^\om$, where $\hat {\mathcal A}$ has the same structure of $\mathcal A$.

\end{lemma}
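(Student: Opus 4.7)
The key observation is that by construction $H_{b,\eps}$ vanishes identically on $(-\infty, b-\eps]$, with derivative supported in $[b-\eps,b+\eps]$. Hence the assumption $k < b - \eps$ guarantees that on $\{v \leq k\}$ both $H_{b,\eps}(v)$ and $H_{b,\eps}'(v)$ are identically zero, so that the source term on the right-hand side of \eqref{eq.prova} switches off whenever one tests against a function supported there. Formally, \eqref{eq.prova} reduces on this set to a plain parabolic $p$-Laplace-type equation, and the lemma will just be a rigorous phrasing of this remark.

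My plan is to test the weak formulation of \eqref{eq.prova} with the nonnegative function $\varphi\,\theta_\delta(k-v)$, where $\varphi\geq0$ is admissible and $\theta_\delta\in C^1(\R)$ is a smooth, nondecreasing approximation of $\chi_{(0,\infty)}$, with $\theta_\delta\equiv 0$ on $(-\infty,0]$ and $\theta_\delta\equiv 1$ on $[\delta,\infty)$. Because $\theta_\delta(k-v)\neq 0$ forces $v < k < b-\eps$, the forcing term drops out entirely. The parabolic term becomes $\partial_t v\cdot\theta_\delta(k-v) = -\partial_t\Phi_\delta(v)$ with $\Phi_\delta(s):=\int_s^k \theta_\delta(k-\tau)\,d\tau$, which converges monotonically to $(k-s)_+$ as $\delta\downarrow 0$; integration by parts produces the time pieces of a sub-/supersolution inequality. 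The elliptic contribution splits as
\[
\langle \widetilde{\mathcal A}, D\varphi\rangle\,\theta_\delta(k-v)\;-\;\varphi\,\theta_\delta'(k-v)\langle\widetilde{\mathcal A},Dv\rangle,
\]
and the second summand is nonpositive by \eqref{tilde inf} combined with the signs of $\theta_\delta'$ and $\varphi$, so it may be discarded to obtain an inequality of the correct sign. The first summand converges, as $\delta\downarrow 0$, to $\langle \widetilde{\mathcal A}(\cdot,\cdot,V,DV), D\varphi\rangle$ with $V:=\min(k,v)$, using that $\widetilde{\mathcal A}(\cdot,\cdot,\cdot,0)\equiv 0$ by the growth bound in \eqref{tilde inf}. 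The only technical nuisance is that $\partial_t v$ is not a priori a pointwise function, so $\theta_\delta(k-v)$ has to be Steklov-averaged in time before being plugged in; this is handled exactly as at the end of the proof of Lemma~\ref{caccioppoli lemma} and is where the bulk of the bookkeeping sits.

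Passing to the limit $\delta\downarrow 0$ then immediately yields that $V=\min(k,v)$ is a weak supersolution of \eqref{ssol}, with $\hat{\mathcal A}:=\widetilde{\mathcal A}$ itself. For the subsolution statement, set $w:=(k-v)_+ = k - V$; substituting $V = k-w$ and $DV = -Dw$ converts the supersolution inequality for $V$ into the subsolution inequality for $w$, at the price of replacing $\widetilde{\mathcal A}$ by
\[
\hat{\mathcal A}(x,t,s,\xi):= -\widetilde{\mathcal A}(x,t,k-s,-\xi),
\]
which inherits the bounds in \eqref{tilde inf} with the same constants (growth from $|{-\xi}|^{p-1}=|\xi|^{p-1}$, ellipticity since $\langle\hat{\mathcal A}(x,t,s,\xi),\xi\rangle = \langle\widetilde{\mathcal A}(x,t,k-s,-\xi),-\xi\rangle\geq\Lambda^{-p}|\xi|^p$) and is continuous in $(s,\xi)$, hence structurally equivalent to $\mathcal A$.
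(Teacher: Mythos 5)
Your argument is correct and follows essentially the same route as the paper's proof: you test with $\varphi\,\theta_\delta(k-v)$, which is just a smoothed version of the paper's cutoff $\min\{(k-v)_+/\epsilon,1\}\,\varphi$, and the three key points (the forcing term vanishes because $\mathrm{supp}\,H_{b,\eps}'$ and $\{v<k\}$ are disjoint when $k<b-\eps$; the elliptic cross term has a favourable sign by \eqref{tilde inf}; the time term integrates to a primitive converging to $(k-v)_+$) are exactly those of the paper, as is the definition $\hat{\mathcal A}(x,t,s,\xi)=-\widetilde{\mathcal A}(x,t,k-s,-\xi)$. The only differences are cosmetic: you derive the supersolution property first and deduce the subsolution one, whereas the paper does the reverse, and the rigorous time-mollification (which must be performed on $\mathcal H(v)$ rather than on $v$ itself) actually appears at the end of the paper's proof of this very lemma rather than of Lemma~\ref{caccioppoli lemma}.
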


\begin{proof}
Let $\mathcal K\Subset B_r$, $[t_1,t_2]\Subset (0,T^\om_r)$, call $\mathcal Q:=\mathcal K\times[t_1,t_2]$ and let $\varphi$ be a test function as above, in particular non-negative; in order to simplify the proof we suppose $\varphi\equiv 0$ in $\mathcal K\times\{t_1,t_2\}$, it will be easy to deduce the proof also in the general case. Set
\[
\phi_{k,\epsilon}(\xi) = \min \biggl\{ \frac{(k-\xi)_+}{\epsilon}, 1\biggr\}, \qquad\text{for $\epsilon \in (0,1)$\,,}
\]
and test equation \eqref{eq.prova} with $\phi_{k,\epsilon}(v) \, \varphi$. Formally, the time derivative terms give
\begin{eqnarray}\label{first.parabolic.sub}
\int_{\mathcal Q} \partial_t v \phi_{k,\epsilon}(v)\, \varphi \dx dt & = & - \int_{\mathcal Q} \partial_t \int_v^k  \phi_{k,\epsilon}(\xi)\, d\xi \, \varphi \dx \dt
\notag\\
& = & \int_{\mathcal Q} \int_v^k  \phi_{k,\epsilon}(\xi)\, d\xi \, \partial_t  \varphi \dx \dt\notag\\
& \stackrel{\epsilon \downarrow 0}{\longrightarrow} & \int_{\mathcal Q} (k-v)_+ \, \partial_t  \varphi \dx \dt\,,
\end{eqnarray}
by the dominated convergence theorem, and
\begin{eqnarray*}
-\int_{\mathcal Q} \partial_t v H_{b,\eps}^\prime (v) \phi_{k,\epsilon}(v)\, \varphi \dx dt & = &  \int_{\mathcal Q} \partial_t \int_v^k H_{b,\eps}^\prime (\xi)
\phi_{k,\epsilon}(\xi)\, d\xi \, \varphi \dx \dt \\
& =& - \int_{\mathcal Q}  \int_v^k H_{b,\eps}' (\xi) \phi_{k,\epsilon}(\xi)\, d\xi \, \partial_t \varphi \dx \dt \\
& = & 0\,,
\end{eqnarray*}
since ${\rm supp}\, H'_{b,\eps}\subset(b-\eps,b+\eps)$ does not intersect the integration interval $(v,k)$ due to the fact that we 
assume $k<b-\eps$. As for the elliptic part, noting that $\phi^\prime_{k,\epsilon}(v)=-\frac{1}{\epsilon} \chi_{\{ k-\epsilon < v < k\}} \leq 0$ and hence
\[
\int_{\mathcal Q}\big\langle \widetilde{\mathcal{A}}(x,t,v,Dv), D\phi_{k,\epsilon}(v) \big\rangle\varphi\dx \dt  \leq 0\,,
\]
we obtain
\begin{align*}
\int_{\mathcal Q}\big\langle \widetilde{\mathcal{A}}(&x,t,v,Dv), D\big[\phi_{k,\epsilon}(v)\, \varphi\big] \big\rangle\dx \dt  \\
&\leq  \int_{\mathcal Q} \ \ \,  \big\langle\widetilde{\mathcal{A}}(x,t,v,Dv),  D \varphi\big\rangle \phi_{k,\epsilon}(v) \dx \dt\\
&\stackrel{\epsilon \downarrow 0}{\longrightarrow}  \int_{\mathcal Q}  \big\langle\widetilde{\mathcal{A}}(x,t,v,Dv),  D \varphi \big\rangle\chi_{\{v<
k\}}\dx \dt\,,
\end{align*}
yielding the conclusion for $(k-v)_+$, once we define $\hat{\mathcal{A}}(x,t,w,\xi):=-\widetilde{\mathcal{A}}(x,t,k-w,-\xi)$. The second 
result follows immediately from this one.

\vspace{3mm}

To justify the above calculations, we demonstrate how to rigorously test equation~\eqref{eq.prova} with a test function depending on $v$ itself; indeed, there is a well recognized difficulty concerning the time regularity of solutions and one has to suitably mollify the test function in time. To this end, take $\rho_h(s)$, for $h\in(0,1)$, the standard symmetric positive mollifier, with support in $(-h,h)$ and denote, for any function $\theta:\R\to\R$, its mollification by $\theta_h:=\theta\ast \rho_h$. If $\theta$ is not defined over $\R$, extend it to zero elsewhere before mollifying. Let $f:\R^+\to\R^+$ be any Lipschitz function; note that, for $\mathcal H(\cdot)$ defined in \eqref{defHHH}, we have that $v\mapsto \mathcal H(v)$ is an increasing function. Therefore, consider as a test function in \eqref{eq.prova} the function
\[
\phi\equiv \phi_h:=\Big[f\big(\mathcal H^{-1}([\mathcal H(v)]_h) \big)\varphi\Big]_h\,,
\]
for $h>0$ small, where $[\mathcal H(v)]_h$ is the convolution of $\mathcal H(v)$ with respect to the time variable and $\varphi$ is as in the beginning of the proof. Note finally that since $\rho_h$ is symmetric, then $\int fg_h\dt=\int f_hg\dt$ by Fubini's theorem; therefore, first using this fact and subsequently integrating by parts, we get
\begin{eqnarray*}
-\int_{B}\int_\Gamma \mathcal H(v) \partial_t\phi_h\dt\dx&=& \int_{B}\int_\Gamma\partial_t [\mathcal H(v)]_hf\big(\mathcal H^{-1}([\mathcal H(v)]_h) \big)\varphi\dt\dx\\
&=&- \int_{B}\int_\Gamma\partial_t \int_{[\mathcal H(v)]_h}^{\mathcal H(k)}f\big(\mathcal H^{-1}(\zeta) \big)\,d\zeta\,\varphi\dt\dx\\
&= &\int_{B}\int_\Gamma\int_{[\mathcal H(v)]_h}^{\mathcal H(k)}f\big(\mathcal H^{-1}(\zeta) \big)\,d\zeta\,\partial_t\varphi\dt\dx\\
&\stackrel{h \downarrow 0}{\longrightarrow}&  \int_{\mathcal Q}\int_{\mathcal H(v)}^{\mathcal H(k)}f\big(\mathcal H^{-1}(\zeta) \big)\,d\zeta\,\partial_t\varphi\dx\dt\\
&=&  \int_{\mathcal Q}\int_{v}^{k}f(\xi)\big(1+H_{b,\eps}'(\xi)\big)\,d\xi\,\partial_t\varphi\dx\dt\,,
\end{eqnarray*}
recalling the definition of $\mathcal H$. In the case $f(\xi)= \phi_{k,\epsilon}(\xi)$, for $\epsilon \in (0,1)$, we then also take the limit for $\epsilon \downarrow0$ as in \eqref{first.parabolic.sub} and we discard the remaining null term. As for the elliptic part we may use dominated convergence, together with the fact that $v \in L^p(t_1,t_2;W^{1,p}(\mathcal K))$, and send first $h$ and then $\epsilon$ to zero to follow the formal calculation in the beginning of the proof.
 \end{proof}
 
\subsection{Harnack estimates} 
The following weak Harnack inequality for supersolutions is Theorem 1.1 of \cite{Kuu08}.
\begin{theorem}[Weak Harnack inequality]\label{Har.Kuusi}
Let $v$ be a non-negative continuous weak supersolution to
\begin{equation}\label{supersolution}
\partial_t v-\divergence \mathcal{A}(x,t,v,Dv)=0\qquad\text{in}\quad B_{4R_0}(x_0)\times(0,T)\,,
\end{equation}
with $\mathcal A$ satisfying \eqref{p.laplacian.assumptions}. Then there exist constants $c_1$ and $c_2$, both depending only on $n,p$ and $\Lambda$, such
that for every $0<t_1<T$ we have
\begin{equation}\label{w.Harnack}
\mean{B_{R_0}(x_0)}v(x,t_1)\dx \leq \frac12\biggl(\frac{c_1 R_0^p}{T-t_1}\biggr)^{1/(p-2)}+c_2\inf_{\mathcal Q}v\,,
\end{equation}
where $\mathcal Q:=B_{2R_0}(x_0)\times (t_1+\tau/2,t_1+\tau)$ and
\begin{equation}\label{time.Harnack}
\tau:=\min\biggl\{T-t_1,c_1R_0^p\biggl(\ \mean{B_{R_0}(x_0)}v(x,t_1)\dx\biggr)^{2-p}\biggr\}\,.
\end{equation}
\end{theorem}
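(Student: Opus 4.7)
Write $A := \mean{B_{R_0}(x_0)} v(x,t_1)\dx$. If $A \leq \tfrac{1}{2}\bigl(c_1R_0^p/(T-t_1)\bigr)^{1/(p-2)}$ there is nothing to prove, so assume the opposite; this forces the intrinsic time to be realized, namely $\tau = c_1R_0^p A^{2-p} < T-t_1$ in \eqref{time.Harnack}. After translation I take $x_0=0$ and $t_1=0$, and the task reduces to proving the lower bound $\inf_{\mathcal Q} v\geq c_2^{-1} A$ on the intrinsically scaled cylinder $\mathcal Q = B_{2R_0}\times(\tau/2,\tau)$, with constants $c_1,c_2$ depending only on $n,p,\Lambda$.

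\textbf{Strategy: DiBenedetto--Gianazza--Vespri expansion of positivity.} The plan is to prove this via three intrinsic steps, all performed on cylinders time-scaled by $A^{2-p} R_0^p$. First, \emph{measure propagation}: starting from the average identity $\mean{B_{R_0}} v(\cdot,0)\dx = A$, I would apply Caccioppoli's inequality (Lemma \ref{caccioppoli lemma} in the specialization $\widetilde{{\mathcal L}_h}=0$) to the truncation $(\theta A - v)_+$, which is a subsolution of a $p$-Laplace type equation by the argument of Lemma \ref{the f-lemma}. Combined with a parabolic Poincar\'e inequality, this shows that the superlevel set $\{x\in B_{R_0}: v(x,t)\geq \theta A\}$ has measure at least $\gamma |B_{R_0}|$ for every $t\in (\tau/2,\tau)$, for suitable $\theta,\gamma\in(0,1)$. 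Second, \emph{measure-to-pointwise}: a De Giorgi iteration of Caccioppoli with the shrinking truncation levels $\theta' A(1+2^{-j})$, $j\in\N_0$, upgrades this measure information to a pointwise bound $v\geq \theta' A$ on a smaller intrinsic cylinder $B_{R_0/2}\times\{t^{\ast}\}$ with $t^\ast\in(\tau/2,\tau)$ and $\theta'=\theta'(n,p,\Lambda)\in(0,\theta)$. Third, \emph{expansion of positivity}: combine the logarithmic test function estimate (in the same spirit as Lemma \ref{lemma: log lemma}, with $\widetilde{{\mathcal L}_h}=0$) with a further De Giorgi iteration and a Krylov--Safonov-type covering to extend the pointwise bound to $v\geq c_2^{-1} A$ throughout $\mathcal Q$.

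\textbf{Main obstacle.} The whole proof is intrinsic: the time scale $\tau = c_1 R_0^p A^{2-p}$ depends on the solution via $A^{2-p}$, and each of the three steps above must be performed on a cylinder whose time-length matches this scaling. The central bookkeeping challenge is to carry out every Caccioppoli, logarithmic, and iteration estimate in a dimensionally consistent way — verifying that the dimensionless combinations (for instance $R_0^{-p}\tau A^{p-2}$) stay bounded above and below — so that the final constants $c_1, c_2$ do not depend on $A$. The additive threshold $\tfrac{1}{2}(c_1 R_0^p/(T-t_1))^{1/(p-2)}$ in \eqref{w.Harnack} is precisely the one below which the intrinsic $\tau$ would overshoot the available time interval $T-t_1$; including this term is the natural way to handle the boundary case where there is not enough room to complete the expansion of positivity in time.
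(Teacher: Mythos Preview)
The paper does not prove this theorem at all: it is quoted as Theorem~1.1 of \cite{Kuu08}, with only the remark immediately following it that the extra factor $1/2$ in \eqref{w.Harnack} is a harmless modification absorbed into $c_1,c_2$. So there is no ``paper's own proof'' to compare your proposal against.

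Your outline is in the right spirit and broadly follows the DiBenedetto--Gianazza--Vespri scheme that underlies the original argument in \cite{Kuu08}; see also \cite{DiBeGianVesp12}. One point where the sketch is not quite right: in your Step~2, a De~Giorgi iteration on the decreasing levels $\theta' A(1+2^{-j})$ does \emph{not} by itself convert ``superlevel set has measure $\geq \gamma|B_{R_0}|$'' into a pointwise lower bound. The iteration requires as input that the \emph{sublevel} set $\{v<\text{level}\}$ has small measure (below a structural threshold), not merely that its complement has measure bounded below by some fixed $\gamma$. The standard route therefore inserts, between your Steps~1 and~2, a shrinking (or clustering) lemma---typically based on the logarithmic test function you invoke only in Step~3---which drives the measure of $\{v<\sigma A\}$ below that threshold for a suitably small $\sigma$. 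With that reordering, and with the bookkeeping you correctly flag as the main issue, the argument is the one carried out in \cite{Kuu08}.
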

The factor $1/2$ in the above theorem is not present in the formulation of \cite{Kuu08}. Nonetheless, this constant is insignificant as it only increases the value of the constants $c_1$ and $c_2$, a fact that can be easily deduced from the proof in~\cite{Kuu08}. For related results, see the recent interesting monograph by DiBenedetto, Gianazza and Vespri~\cite{DiBeGianVesp12}, and also~\cite{DiBeGianVesp08}, by the same authors, about the Harnack inequality for {\em weak solutions}.

\vspace{3mm}

The next proposition, which encodes the decay rate of supersolutions, follows from the iteration of the previous theorem; see \cite[Corollary 3.4]{GSV} for a very similar statement.
\begin{proposition}[Decay of positivity]\label{lemma:decay of pos}
Let $v$ be a non-negative continuous weak supersolution to \eqref{supersolution} in $B_{4R_0}(x_0) \times (t_0,t_0 + T)$. Then there exists a constant $c_3$, depending only on $n,p$ and $\Lambda$, such that, if
\begin{equation}\label{starting.level}
\inf_{x\in B_{2R_0}(x_0)}v(x,t_0)\geq k  
\end{equation}
for some level $k>0$, then
\[
\inf_{x\in B_{2R_0}(x_0)} v(x,t) \geq \lambda(t) := \frac{k}{c_3} \left(1+c_3(p-2)k^{p-2}\frac{t-t_0}{R_0^p}\right)^{-\frac{1}{p-2}}
\]
for all $t \in (t_0,t_0+T]$.
\end{proposition}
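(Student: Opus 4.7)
The strategy is a direct iteration of the weak Harnack inequality (Theorem~\ref{Har.Kuusi}), which turns the initial pointwise bound \eqref{starting.level} into a Barenblatt-type decay estimate. Let me inductively construct a sequence of times $t_0 < t_1 < t_2 < \cdots \leq t_0 + T$ and levels $k = k_0 > k_1 > k_2 > \cdots > 0$ such that
\[
\inf_{x\in B_{2R_0}(x_0)} v(x, t_j) \geq k_j.
\]
For the base case, \eqref{starting.level} gives $\mean{B_{R_0}(x_0)} v(\cdot, t_0)\dx \geq k_0$. At step $j$, I would apply Theorem~\ref{Har.Kuusi} to $v$ on the cylinder $B_{4R_0}(x_0)\times(t_j, t_0+T)$, starting from time $t_j$. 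The key point is to choose the effective time-length so that the non-homogeneous term on the right-hand side of \eqref{w.Harnack} is absorbed: requiring $\frac12(c_1 R_0^p/(T - t_j))^{1/(p-2)} \leq k_j/2$, equivalently $T - t_j \geq c_1 R_0^p k_j^{2-p}$, makes the first term at most half of the mean. In the regime where this condition holds, Theorem~\ref{Har.Kuusi} yields
\[
\inf_{B_{2R_0}(x_0)\times(t_j+\tau_j/2,\, t_j+\tau_j)} v \;\geq\; \frac{k_j}{2c_2} =: k_{j+1},
\]
with $\tau_j$ from \eqref{time.Harnack} satisfying $\tau_j \leq c_1 R_0^p k_j^{2-p}$, and I would set $t_{j+1}:= t_j + \tau_j$.

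Unwinding the recursion gives $k_j = k\,(2c_2)^{-j}$ and
\[
t_j - t_0 \;\leq\; c_1 R_0^p \sum_{i=0}^{j-1} k_i^{2-p} \;=\; c_1 R_0^p\, k^{2-p} \sum_{i=0}^{j-1} (2c_2)^{i(p-2)} \;=\; c_1 R_0^p\, k^{2-p}\,\frac{(2c_2)^{j(p-2)}-1}{(2c_2)^{p-2}-1}.
\]
Solving this for $(2c_2)^{-j}$ produces, after absorbing the constant $[(2c_2)^{p-2}-1]/c_1$ into a single $\hat c = \hat c(n,p,\Lambda)$,
\[
k_j \;\geq\; k\left(1 + \hat c\,(p-2)\, k^{p-2}\,\frac{t_j - t_0}{R_0^p}\right)^{-\frac{1}{p-2}},
\]
which is precisely the claimed decay profile. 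For a general time $t \in (t_0, t_0 + T]$, I would locate the index $j$ with $t \in (t_j+\tau_j/2,\, t_j + \tau_j]$ (or pass to the last admissible $j$ otherwise), yielding $\inf_{B_{2R_0}(x_0)} v(\cdot, t) \geq k_{j+1}$, whence the desired bound with an updated constant $c_3$.

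\textbf{Main obstacle.} The delicate point is that the weak Harnack inequality controls the infimum only on the \emph{upper half} $(t_j + \tau_j/2, t_j + \tau_j)$ of each step and requires $T - t_j$ to be at least comparable to $c_1 R_0^p k_j^{2-p}$ in order to absorb the inhomogeneous term. One must therefore cover the whole interval $(t_0, t_0+T]$ by iterating carefully, and separately treat the ``small time'' regime: when $k^{p-2}(t-t_0)/R_0^p$ is of order one or smaller, the claimed bound reduces to $\lambda(t) \approx k/c_3$, which already follows from a single application of Theorem~\ref{Har.Kuusi} (or trivially from \eqref{starting.level} with a suitable constant). Tracking the constants so that the exponent $1/(p-2)$ and the multiplicative factor $c_3(p-2)$ appear exactly as in the statement is a routine but bookkeeping-heavy computation, essentially identical to the one carried out in \cite[Corollary 3.4]{GSV}.
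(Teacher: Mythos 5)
Your overall strategy --- iterating the weak Harnack inequality to produce a geometric cascade of levels $k_j=k(2c_2)^{-j}$ and matching it against the Barenblatt profile --- is the same as the paper's, but the specific scheme you propose has a gap that the paper's proof is designed precisely to avoid. You apply Theorem~\ref{Har.Kuusi} directly to $v$, so the waiting time of the $j$-th step is
\[
\tau_j=\min\biggl\{T-t_j,\;c_1R_0^p\biggl(\ \mean{B_{R_0}(x_0)}v(x,t_j)\dx\biggr)^{2-p}\biggr\},
\]
and since you only know $\mean{B_{R_0}(x_0)}v(x,t_j)\dx\geq k_j$ (the average may be arbitrarily large), you obtain only the \emph{upper} bound $\tau_j\leq c_1R_0^pk_j^{2-p}$, hence only an upper bound on $t_j-t_0$. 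But the conclusion requires the opposite control: for $t$ in the $j$-th window you record $\inf v(\cdot,t)\geq k_{j+1}=k(2c_2)^{-(j+1)}$, and to compare this with $\lambda(t)$ you must bound $(2c_2)^{j(p-2)}$ from \emph{above} in terms of $t-t_0$, i.e., you need a \emph{lower} bound on how much time has elapsed after $j$ steps. From $t_j-t_0\leq c_1R_0^pk^{2-p}\sum_i(2c_2)^{i(p-2)}$ one deduces $(2c_2)^{-j}\leq(1+\cdots)^{-1/(p-2)}$, which is an upper bound on $k_j$ --- your ``solving for $(2c_2)^{-j}$'' step reverses the inequality. Concretely, if $v$ has a large spatial average but an infimum close to the profile, your time steps shrink, the levels $k_j$ collapse geometrically while $t_j$ barely advances, and the recorded bound near $t_0$ becomes far weaker than $\lambda(t)\approx k/c_3$.

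The paper's remedy is to iterate the Harnack inequality not on $v$ but on the truncations $v_i=\min\{v,(2c_2)^{-i}k\}$, which are again supersolutions (cf.\ Lemma~\ref{the f-lemma}). Under the induction hypothesis, the average of $v_i(\cdot,\tau_{i-1})$ over $B_{R_0}(x_0)$ equals the truncation level $(2c_2)^{-i}k$ \emph{exactly}, so each time increment is pinned down as $\tau_i-\tau_{i-1}=c_1R_0^p\bigl((2c_2)^{-i}k\bigr)^{2-p}$, neither larger nor smaller; this two-sided identity is what yields the matching lower and upper integral bounds on $\tau_j$ and hence the profile $\lambda$. The truncation also absorbs the inhomogeneous term in \eqref{w.Harnack} automatically (since $T-\tau_{i-1}\geq\tau_i-\tau_{i-1}$ as long as $\tau_i\leq T$), replacing your side condition on $T-t_j$. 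You should rebuild the induction around these truncated supersolutions; the rest of your outline (integral comparison of the geometric sum, covering of the intermediate times left uncovered by the upper half-windows) then goes through essentially as you describe.
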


\begin{proof}
Suppose, without loss of generality, that $t_0=0$. Define inductively
\[
\tau_0:=t_0=0,\qquad\tau_j := c_1R_0^p \sum_{\ell=1}^j \biggl(\ \mean{B_{R_0}(x_0)}\min\big\{v(\cdot,\tau_{\ell-1}),(2c_2)^{-\ell}k\big\}\dx\biggr)^{2-p},
\]
for all indices $j$ such that $\tau_j\leq T$, say $j\in\{1,\dots,\bar\jmath\}$, and where $c_2$ is the constant of Theorem \ref{Har.Kuusi}. Note that, for $i\in\{1,\dots,\bar\jmath\}$, there holds
\[
\biggl(\frac{c_1R_0^p}{\tau_i-\tau_{i-1}}\biggr)^{\frac{1}{p-2}}= \mean{B_{R_0}(x_0)}\min\big\{v(\cdot,\tau_{i-1}),(2c_2)^{-i}k\big\}\dx\,;
\]
hence, since $\tau$ in \eqref{time.Harnack} turns out to be, in our case,  exactly $\tau_i-\tau_{i-1}$,  Harnack estimate~\eqref{w.Harnack} applied to the supersolution $v_i:=\min\{v,(2c_2)^{-i}k\}$ gives
\begin{equation}\label{bound.below.Harnack}
\inf_{B_{2R_0}(x_0) \times ((\tau_{i-1}+\tau_i)/2,\tau_i)} v_i \geq\frac{1}{2c_2}\mean{B_{R_0}(x_0)}v_i(\cdot,\tau_{i-1})\dx\geq\frac{k}{(2c_2)^i}\,,
\end{equation}
and the last inequality holds if $\inf_{B_{R_0}(x_0) }v_i(\cdot,\tau_{i-1})\geq (2c_2)^{-(i-1)}k$. Using an iterative argument, starting from \eqref{starting.level}, we see that \eqref{bound.below.Harnack} holds for any $j \in \{1,\dots,\bar\jmath\}$. This means that, for such a $j$, we have $v_j(x,\tau_j)=(2c_2)^{-j}k$ in $B_{R_0}(x_0)$ and $\tau_j = c_1k^{2-p}R_0^p\sum_{\ell=1}^j (2c_2)^{\ell(p-2)}$. Therefore,
\[
\int_0^j (2c_2)^{s(p-2)}\ds \leq \frac{\tau_j}{c_1k^{2-p}R_0^p} \leq   \int_{1}^{j+1} (2c_2)^{s(p-2)}\ds
\]
and we thus obtain a lower and an upper bound for $\tau_j$:
\[
\frac{(2c_2)^{j(p-2)}-1}{(p-2)\ln(2c_2)} \leq \frac{\tau_j}{c_1k^{2-p}R_0^p} \leq 2c_2 \frac{(2c_2)^{j(p-2)}-1}{(p-2)\ln(2c_2)}\,.
\]
The bound from below gives
\[
(2c_2)^{-j} \geq \left(1 + (p-2)  \frac{\ln(2c_2)}{c_1} \frac{\tau_j }{k^{2-p}R_0^p}\right)^{-1/(p-2)} \geq\frac{c_3}{k} \lambda(\tau_j),
\]
provided that $c_3\geq\ln(2 c_2)/c_1$.   Finally, taking into account that $v_i\leq v$, another application of~\eqref{bound.below.Harnack}, for an appropriate $R_0$, and with starting time $\tau_{j-1}$, together with a simple covering argument, shows that
\[
\inf_{B_{2R_0}(x_0) \times (\tau_{j-1},\tau_j)} v \geq \frac1{2c_2} c_3 \lambda(\tau_{j-1}) \geq \lambda(\tau)
\]
whenever $\tau \in (\tau_{j-1},\tau_j)$, provided that $c_3 \geq 2c_2$. Clearly, at this point, taking $c_3 := 2c_2\geq\ln(2 c_2)/c_1$ finishes the proof.
\end{proof}

\section{Reducing the oscillation}\label{reducing}
Recalling now the definitions of $\widetilde Q_r^\om$ and $Q_r^\om$ from subsection \ref{spgeo}, we suppose that $w$ is a weak solution to \eqref{eq:mollified eq} in $Q_r^\om$.

\subsection{Basic reductions}\label{reductions}
Define
\begin{equation}\label{v}
 v(x,t):= w(x,t) - \inf_{Q_r^\om} w \qquad \mathrm{and} \qquad b:=a-\inf_{Q_r^\om} w\,.
\end{equation}
Then $\sup v = \osc v =\osc w$, $\inf v = 0$, these quantities being meant over $Q_r^\om$, and
\begin{equation} \label{the equation for v}
\partial_t v - \mathrm{div} \, \widetilde{\mathcal{A}}(x,t,v+\inf_{Q_r^\om} w,Dv)  = -\widetilde{{\mathcal L}_h}\, \partial_t H_{b,\eps}(v)\,,
\end{equation}
$\widetilde{{\mathcal L}_h} \in [0,1]$. From now on we shall also suppose that
\begin{equation}\label{ass.trivial}
\osc v:=\osc_{Q_r^\om}v\geq\omega(r) \qquad \text{and} \qquad \eps<\frac{\omega(r)}{8}\,.
\end{equation}
Note that if $b \notin [0, \osc v ]$, we then  have
$$\partial_t v - \mathrm{div} \, \widetilde{\mathcal{A}}(x,t,v+\inf_{Q_r^\om} w,Dv) =0 \qquad \mathrm{in} \; \, Q_r^\om$$
for $\eps$ small enough, and the oscillation reduction follows by the well-known argument of DiBenedetto, see~\cite{DiBe93, Urba08}. In this case, even if the modulus of continuity is H\"older, we will not make use of this information since the intrinsic geometry we are using does not allow us to reproduce the estimates of \cite{DiBe93, Urba08}. We, instead, observe that our reasoning also works in the case of evolutionary $p$-Laplace type equations since the phase transition term $\widetilde{{\mathcal L}_h}\,\partial_t H_{b,\varepsilon}$ only appears as an inhomogeneous term in our calculations, and in particular it works for $\widetilde{{\mathcal L}_h} = 0$. 

Thus we may assume from now on $b \in [0, \osc v ]$. 
If $b \in \left[ 0, \frac{\osc v}{2} \right]$, we can consider $\bar{v}=\osc v - v$ and $\bar{b}=\osc v - b$ instead, and 
then
\[
\partial_t \bar{v} - \mathrm{div} \,  \bar{\mathcal{A}}(x,t,\bar v,D \bar v) = - \widetilde{{\mathcal L}_h}\, \partial_t H_{\bar{b},\eps}(\bar{v}) 
\]
with $\bar{b} \in \left[ \frac{\osc \bar{v}}{2}, \osc \bar{v} \right]$. Here
$$
\bar{\mathcal{A}}(x,t,\bar v,D \bar v) = - \widetilde{\mathcal{A}}(x,t,-\bar v +\sup_{Q_r^\om} w,-D \bar v)\,,
$$
which has the same structure as $\mathcal{A}$. Consequently we can further assume that
\begin{equation*}
b \in \left[ \frac{\osc v}{2}, \osc v \right]. 
\end{equation*}
Let us, finally, introduce the Sobolev conjugate exponent of $p$, $\kappa p$, where 
\begin{equation} \label{eq:kappa}
\kappa := \begin{cases}
  \frac{n}{n-p} &\qquad  \text{for }p<n\,,   \\[3pt]
  \mbox{any number $>1$} &\qquad \text{for }p=n\,, \\[3pt]
   +\infty &\qquad\text{for } p>n\,;
\end{cases}
\end{equation}
$\alpha$, appearing in \eqref{ratio.kappa}, will be related to $\kappa$ in the following way: 
$$\frac1\alpha=1+\frac{\kappa}{\kappa-1}\,.$$ 
From now on, it will be more convenient for our purposes to work with $\kappa$.

\vspace{3mm}

Now we fix the classical alternative. Clearly one of the following two options must hold: for $\varepsilon_1$  a free parameter, to be fixed in due course, either
\begin{equation}\label{alt.1}
\left| \widetilde Q_r^{\omega(\cdot)}\cap \left\{  v \geq \frac{\osc v}{4} \right\} \right| > \varepsilon_1 \left[ \omega (r) \right]^{1+\frac{\kappa}{\kappa-1}} \big| \widetilde Q_r^{\omega(\cdot)} \big|
\tag{\text{Alt.\,1}}
\end{equation}
or
\begin{equation}\label{alt.2}
\left|  \widetilde Q_r^{\omega(\cdot)}\cap\left\{  v \geq \frac{\osc v}{4} \right\} \right| \leq \varepsilon_1 \left[ \omega (r) \right]^{1+\frac{\kappa}{\kappa-1}} \big| \widetilde Q_r^{\omega(\cdot)} \big|
\tag{\text{Alt.\,2}}
\end{equation}
holds true. We analyze separately the two different cases.

\subsection{The first alternative}
Consider first the case where \eqref{alt.1} holds. Then there exists $ t_r^1 \in (0,\widetilde T_r^\om)$ such that
\begin{equation}\label{alt.1.time}
\left| B_{r/4}\cap\left\{  v(\cdot, t_r^1) \geq \frac{\osc v}{4} \right\}   \right| > \varepsilon_1 \left[ \omega (r) \right]^{1+\frac{\kappa}{\kappa-1}} \left| B_{r/4} \right|; 
\end{equation}
otherwise, just integrate to get a contradiction.

\vspace{3mm}

Observing that, due to \eqref{ass.trivial},
\[
\frac{\osc v}{4}<\frac{\osc v}{2}-\frac{\osc v}{8}\leq b-\frac{\omega(r)}{8}<b-\eps,
\]
we can use the weak Harnack estimate on the supersolution $\hat v:=\min\{v,\osc v/4\}$. Thus, Lemma \ref{the f-lemma}, and hence Theorem \ref{Har.Kuusi}, apply to $\hat v$:
\begin{equation}\label{*}
\mean{B_{r/4}} \hat v(x, t_r^1)\dx \leq\frac12 \biggl( \frac{c_1 (r/4)^p}{T_r^\om - t_r^1} \biggr)^{\frac{1}{p-2}} + c_2 \inf_{B_{r/2} \times ( t_r^1 +
{\tau/2}, t_r^1 + \tau ) } \hat v\,,
\end{equation}
where
$$\tau = \min \left\{ T_r^\om - t_r^1 , c_1 \Big(\frac r4\Big)^p \biggl(\mean{B_{r/4}} \hat v(x, t_r^1)\dx \biggr)^{2-p} \right\}\,.$$
Due to \eqref{alt.1.time},
\begin{equation}\label{es.es.es}
 \mean{B_{r/4}} \hat v(x, t_r^1)\dx \geq \varepsilon_1 \left[ \omega (r) \right]^{1+\frac{\kappa}{\kappa-1}} \frac{\osc v}{4} \geq  \frac{\eps_1}{4} \left[ \omega
 (r) \right]^{2+\frac{\kappa}{\kappa-1}},
\end{equation}
where the last inequality follows from \eqref{ass.trivial}.
Now, if
\begin{equation}\label{**}
T_r^\om-t_r^1 \geq  c_1 \Big(\frac r4\Big)^p  \biggl( \mean{B_{r/4}} \hat v(x, t_r^1)\dx \biggr)^{2-p},
\end{equation}
then
$$\tau = c_1 \Big(\frac r4\Big)^p  \biggl( \mean{B_{r/4}}\hat  v(x, t_r^1)\dx \biggr)^{2-p}$$
and
$$\left( \frac{c_1 \Big(\displaystyle{\frac r4}\Big)^p}{T_r^\om - t_r^1} \right)^{\frac{1}{p-2}}  \leq \left( \frac{c_1 \Big(\displaystyle{\frac r4}\Big)^p}{c_1 \displaystyle \Big(\frac r4\Big)^p  \left( \displaystyle \mean{B_{r/4}}\hat  v(x, 
t_r^1)\dx \right)^{2-p}} \right)^{\frac{1}{p-2}} =   \mean{B_{r/4}} \hat v(x, t_r^1)\dx\,.$$
So \eqref{*} reads
$$\mean{B_{r/4}} \hat v(x, t_r^1)\dx \leq 2 c_2  \inf_{B_{r/2} \times (t_r^1 + {\tau/2}, t_r^1 + \tau) } \hat v$$
and consequently, combining the previous display with \eqref{es.es.es}, we get
\begin{equation}\label{part.case1}
\frac{\eps_1}{8c_2}\left[ \omega (r) \right]^{2+\frac{\kappa}{\kappa-1}} \leq  \inf_{B_{r/2}\times ( t_r^1 + \tau/2, t_r^1 + \tau ) } \hat v\,.
\end{equation}
Hence if \eqref{**} holds, then we infer \eqref{part.case1}. Note now that, in particular,  if we fix 
\begin{equation} \label{eq:M fix}
M := 1+  \frac{\eps_1^{2-p} c_1 }{16}\geq2
\end{equation}
in the definition of $T_r^\om$, provided that $\eps_1^{p-2}\leq c_1/16$, then
\begin{equation*}
T_r^\om-\widetilde T_r^\om\geq T_r^\om-[ \omega(r) ]^{(2-p)(2+\frac{\kappa}{\kappa-1})} r^p = \eps_1^{2-p} c_1 r^p \frac{\left[ \omega (r) \right]^{(2-p)(2+\frac{\kappa}{\kappa-1})}}{16}\,.
\end{equation*}
Thus we have, by \eqref{es.es.es}, that
\begin{align*}
T_r^\om-t_r^1 \geq T_r^\om-\widetilde T_r^\om & =  c_1 \Big(\frac r4\Big)^p \left( \frac{ \eps_1}{4} \left[ \omega (r) \right]^{2+\frac{\kappa}{\kappa-1}} \right)^{2-p}
\\
&\geq c_1 \Big(\frac r4\Big)^p \left( \mean{B_{r/4}} \hat v(x, t_r^1)\dx \right)^{2-p}=\tau
\end{align*}
and hence \eqref{**} is satisfied.

\vspace{3mm}

Now the goal is to push positivity at time $t_r^1+\tau$ up to time $T_r^\om$; note that by \eqref{**} and subsequent lines, $t_r^1+\tau\leq T_r^\om$. To do this, we use Proposition~\ref{lemma:decay of pos}, with $k=\eps_1\left[ \omega (r) \right]^{2+\frac{\kappa}{\kappa-1}}/(8c_2)$, to obtain
\begin{align*}
\inf_{B_{r/2}\times (t_r^1 + {\tau/2}, T_r^\om)}\hat v &\geq \frac{k}{c_3}\left(1+c_3(p-2)k^{p-2}\frac{T_r^\om-(t_r^1 + {\tau/2} )}{(r/4)^p}\right)^{-\frac{1}{p-2}}\\
&\geq \frac{\eps_1}{8c_2c_3}\left[ \omega (r) \right]^{2+\frac{\kappa}{\kappa-1}} \big(1+\tilde c\,c_3(p-2)\big)^{-\frac{1}{p-2}},
\end{align*}
since
\[
T_r^\om-\Big(t_r^1 + \frac{\tau}{2} \Big)\leq T_r^\om\leq \frac{c_1}{\,8\eps_1^{p-2}}  \left[ \omega (r) \right]^{(2-p)(2+\frac{\kappa}{\kappa-1})}r^p=\tilde c\,k^{2-p}r^p\,,
\]
$\tilde c$ depending on $p,c_1,c_2$ and hence, ultimately, only on $n,p$ and $\Lambda$. Recalling that, clearly, $\hat v\leq v$, and noting that $\tau\leq T_r-\widetilde T_r^\om$ and $\widetilde T_r^\om\leq T_r^\om/2$, by \eqref{**} and \eqref{eq:M fix}, we conclude that the infimum of $v$ has been lifted and thus we have reduced the oscillation: we have indeed proved that
\begin{equation}\label{conclusion 1}
\text{\eqref{ass.trivial} and \eqref{alt.1}}\quad\Longrightarrow\quad\osc_{B_{r/4} \times (3T_r^\om/4, T_r^\om)} \ v \leq
\osc_{Q_r^\om} v - \theta_1 \left[ \omega (r) \right]^{2+\frac{\kappa}{\kappa-1}}\,,
\end{equation}
with $\theta_1\equiv \theta_1(n,p,\Lambda,\eps_1)\in(0,1)$.

\subsection{The second alternative} Let us now consider the case when the second alternative \eqref{alt.2} holds:
$$\left| \widetilde Q_r^{\omega(\cdot)}\cap\left\{  v \geq \frac{\osc v}{4} \right\} \right| \leq \varepsilon_1\left[ \omega (r) \right]^{1+\frac{\kappa}{\kappa-1}} \big| \widetilde Q_r^{\omega(\cdot)}\big|\,.$$
We shall use this information as a starting point for a De Giorgi-type iteration, where we fix the sequence of nested 
cylinders as
\begin{equation*}
 U_j = B_j \times \Gamma_j := B_{(1+2^{-j}) r/8} \times \left( \frac{1-2^{-j}}{2} \, \widetilde T_r^\om, \widetilde T_r^\om\right),
\end{equation*}
and we consider cut-off functions $\phi_j$ such that
$$\phi_j \equiv 1 \quad \textrm{in} \ U_{j+1} \qquad \textrm{and} \qquad \phi_j = 0 \quad \textrm{on} \ \partial_p U_j\,,$$
with
\begin{equation}\label{test.gra}
 \left( \partial_t \phi_j^p\right)_+ \leq \frac{c\,2^j}{\widetilde T_r^\om} \qquad \textrm{and} \qquad  \left| D \phi_j \right| \leq \frac{c\,2^{j}}{r}\,.
\end{equation}
Using then the energy estimate \eqref{caccioppoli estimate}, with $\kappa$ defined in~\eqref{eq:kappa} (with the formal agreement that $1/\infty=0$ and 
\[
\biggl(\ \mean{\, B_j} \left[(v-k)_+  \phi_j\right]^{\kappa p} \dx \biggr)^{1/\kappa}:=\big\|(v-k)_+  \phi_j\bigr\|_{L^\infty(B_j)}^p\qquad\text{when $\kappa=\infty$}),
\] 
we infer
\begin{align*}
\mean{\, U_{j+1}} &(v-k)_+^{2 (1-1/\kappa)+p} \dx\dt\notag\\
&\leq \mean{\, U_j} \left[ (v-k)_+^2  \phi_j^p \right]^{(1-1/\kappa) }(v-k)_+^p  \phi_j^p   \dx \dt\notag\\
&\leq  \mean{\, \Gamma_j} \biggl[\mean{\, B_j} (v-k)_+^2  \phi_j^p \dx \biggr]^{1-1/\kappa}     \biggl[\mean{\, B_j} \left[(v-k)_+  \phi_j\right]^{\kappa p} \dx \biggr]^{1/\kappa}  \dt \notag\\
&\leq c\,\big[\widetilde T_r^\om\big]^{1-1/\kappa} \biggl[ \sup_{t \in \Gamma_j} \frac{1}{\widetilde T_r^\om}  \mean{\, B_j} \big[(v-k)_+^2  \phi_j^p\big](\cdot,t) \dx \biggr]^{1-1/\kappa}\times\notag
\\
& \qquad\times r^p \mean{\, U_j}  \big| D \left[ (v-k)_+ \phi_j \right] \big|^p  \dx \dt\notag\\
&\leq c\, r^p \big[\widetilde T_r^\om\big]^{1-1/\kappa} \biggl[\ \mean{\, U_j} \Bigl(    (v-k)_+^p|D\phi_j|^p \notag\\
&\hspace{1cm}+\Big[ (v-k)_+^2 +\widetilde{{\mathcal L}_h}\, (b+\eps-k)_+ \chi_{\{ v \geq k\}} \Big] \left( \partial_t \phi_j^p\right)_+ \Bigr)\dx\dt \biggr]^{2-1/\kappa},
 \end{align*}
using H\"older's inequality and Sobolev's embedding. 
The next step is to choose the levels
$$k_j := \osc v - \frac{1+2^{-j}}{4} \omega (r)\,.$$
We have $ k_j > \frac{\osc v}{4}$, since $\omega (r) \leq \osc v$, and the relations
\begin{align*}
&(v-k_j)_+ \geq  (k_{j+1} - k_j)  \chi_{\{ v \geq k_{j+1} \} } = 2^{-j-3} \omega (r) \chi_{\{ v \geq k_{j+1} \} }\,,\\[3pt]
&(v-k_j)_+ \leq [\omega (r)]\chi_{\{ v \geq k_j \} }\,,\\[3pt]
&(b+\eps - k_j)_+ \leq \omega (r) \qquad ( \text{since $b\leq \osc v$ and $\eps\leq \omega(r)/8$}  )\,.
\end{align*}
We go back to the iteration inequality, with the notation
$$A_j := \frac{\left| U_j \cap \left\{ v \geq k_j \right\}  \right| }{\left| U_j \right|}\,,$$
to obtain, using the definition of $T_r^\om$ \eqref{choices T_r and T_r^1} and \eqref{test.gra}
\begin{align*}
\big( 2^{-j-3} & \omega (r) \big)^{2  (1-1/\kappa)+p} A_{j+1}\\
& \leq  c r^p \big[\widetilde T_r^\om\big]^{1-1/\kappa} \left[   2^j \frac{\omega (r)}{\widetilde T_r^\om} + 2^j \frac{\omega (r)^2}{\widetilde T_r^\om} + 2^{jp} \frac{\omega
(r)^p}{r^p}\right]^{2-1/\kappa} A_j^{2-1/\kappa}\\
& \leq  c^j r^p \big[r^p \left[ \omega(r) \right]^{2-p}\big]^{1-1/\kappa} \left[  \frac{\omega (r)^{p-1}}{r^p} +  \frac{\omega (r)^p}{r^p}\right]^{2-1/\kappa}
A_j^{2-1/\kappa} \\
&\leq  c^j  \left[ \omega(r) \right]^{(2-p)(1-1/\kappa) +(p-1)(2-1/\kappa)} A_j^{2-1/\kappa}\,.
\end{align*}
Note here that we also appealed to the fact that $0\leq \widetilde{{\mathcal L}_h} \leq 1$. Thus,
\begin{align*}
A_{j+1}  &\leq  c_0^j  \left[ \omega(r) \right]^{(2-p)(1-1/\kappa) + (p-1)(2-1/\kappa) - p - 2  (1-1/\kappa)} A_j^{2-1/\kappa}\\ 
&=  c_0^j  \left[ \omega(r) \right]^{-(2-1/\kappa)} A_j^{2-1/\kappa}\,,
\end{align*}
where the constant $c_0$ depends only on $n,p,\Lambda$ and $\kappa$. The lemma on the fast convergence of sequences asserts that $A_j \rightarrow 0$ if
$$A_0 \leq c_0^{-(1-1/\kappa)^{-2}}  \left[ \omega(r) \right]^{\frac{2\kappa-1}{\kappa-1}}\,,$$
which is exactly our assumption \eqref{alt.2}, once we fix the value of $\varepsilon_1$ as 
\begin{equation*}
\varepsilon_1:=\min\big\{c_0^{-(1-1/\kappa)^{-2}},(c_1/16)^{1/(p-2)}\big\}\,. 
\end{equation*}
We conclude that
\begin{equation} \label{friday}
v \leq \osc v - \frac{\omega (r)}{4} \qquad \textrm{in} \ \ B_{r/8} \times \big( \widetilde T_r^\om/2, \widetilde T_r^\om\big)\,.
\end{equation}
Note that $\eps_1$ is a quantity depending only on $n,p,\Lambda$ and $\kappa$ through the dependencies of $c_0$ and $c_1$. This,  via~\eqref{eq:M fix}, fixes also the value of $M$ as a constant depending only on
$n,p,\Lambda$ and possibly on $\kappa$. 

We next need to forward this information in time, and to do this we  first use the logarithmic Lemma \ref{lemma: log lemma} and then another De Giorgi 
iteration. Note, indeed, that now $M\equiv M(n,p,\Lambda,\kappa)$ is fixed; hence, for $\nu^\ast\in(0,1)$ to be chosen, \eqref{friday} together with Lemma \ref{lemma: log lemma} yields
\[
\frac{\Bigl|\big( B_{r/16}\times( \widetilde T_r^\om/2, T_r^\om)\big) \cap \left\{ v \geq \osc v - \varsigma [\omega (r)]^{2+\frac{\kappa}{\kappa-1}} \right\} \Bigr| }{\big| B_{r/16}
\times ( \widetilde T_r^\om/2, T_r^\om)\big| } \leq  \nu^\ast,
\]
for a constant $\varsigma \equiv\varsigma(n,p,\Lambda,\kappa,\nu^*)\in (0,1)$; this will be the starting point of our second iteration. 
 Let indeed
$$V_j := B_{(1+2^{-j}) r/32} \times \big( \widetilde T_r^\om, T_r^\om\big),
\qquad \quad 
B_j := B_{(1+2^{-j}) r/32}\,,
$$
and consider smooth cut-off functions $\phi_j$, depending only on the spatial variables, such that
$$\phi_j \equiv 1 \quad \textrm{in} \ B_{j+1} \quad \text{and} \quad \phi_j = 0 \quad \text{on} \ \partial B_j,\quad\text{with}\quad  \left| D \phi_j \right| \leq \frac{c\,2^j}{r}\,.$$
If we choose a level such that $k \geq \osc v - \omega (r)/4$, then
\begin{equation}\label{zero}
 (v-k)_+ \phi^p  = 0 \qquad \ \text{on}\ \partial_p V_j 
\end{equation}
by \eqref{friday}, so recalling that $1/\alpha=1+\kappa/(\kappa-1)$, we put
$$k_j = \osc v - \frac{(1+2^{-j})}{8} \varsigma\left[ \omega (r) \right]^{2+\frac{\kappa}{\kappa-1}}= \osc v - \frac{(1+2^{-j})}{8} \varsigma\left[ \omega (r) \right]^{\frac{\alpha+1}{\alpha}}\,;$$
note that $k_j\geq \osc v - \frac{\omega (r)}{4}$. We redefine
$$A_j := \frac{\left| V_j \cap \left\{ v \geq k_j \right\}  \right| }{\left| V_j \right|}$$
and observe that
\[
(v-k_j)_+ \leq \varsigma \left[ \omega (r) \right]^{\frac{\alpha+1}{\alpha}}\quad\text{and}\quad(v-k_j)_+ \geq 2^{-j-4} \varsigma\left[ \omega (r) \right]^{\frac{\alpha+1}{\alpha}} \chi_{\{ v \geq k_{j+1}\}}\,.
\]
Using again Caccioppoli's estimate,
\[
\left[ 2^{-j-4} \varsigma \left[ \omega (r) \right]^{\frac{\alpha+1}{\alpha}} \right]^{p + \frac{2\alpha}{1-\alpha}} A_{j+1} \leq c\, r^p \big[T_r^\om\big]^{\frac{\alpha}{1-\alpha}} \biggl[2^{jp}\frac{ \big[ \varsigma \big[ \omega (r) \big]^{\frac{\alpha+1}{\alpha}} \big]^p }{r^p} \biggr]^{\frac{1}{1-\alpha}} A_j^{\frac{1}{1-\alpha}}
\]
because of \eqref{zero} and the fact that $\phi$ is time independent. This implies
\begin{align*}
A_{j+1} & \leq  c^jM^{\frac{\alpha}{1-\alpha}} r^{\frac{p}{1-\alpha}} \varsigma^{\frac{p}{1-\alpha}-p-\frac{2\alpha}{1-\alpha}} \\
&\qquad\times\frac{  [\omega(r)]^{(2-p)(\frac{\alpha+1}{\alpha})\frac{\alpha}{1-\alpha}+(\frac{\alpha+1}{\alpha})[\frac{p}{1-\alpha}-(p + \frac{2\alpha}{1-\alpha})]} \,
}{r^{\frac{p}{1-\alpha}}} A_j^{\frac{1}{1-\alpha}} \\
& =  c^j M^{\frac{\alpha}{1-\alpha}} \varsigma^{\frac{\alpha}{1-\alpha}(p-2)} A_j^\frac{1}{1-\alpha}\\
& \leq  \tilde c^j M^{\frac{\alpha}{1-\alpha}} A_j^{\frac{1}{1-\alpha}}\,,
\end{align*}
since $\varsigma < 1$, and for $\tilde c$ depending on $n,p,\Lambda$ and $\kappa$; recall indeed again that $M\equiv M(n,p,\Lambda,\kappa)$ has already been fixed.
The sequence $A_j$ is then infinitesimal if
\[
A_0 \leq \tilde c^{-(\frac{1-\alpha}{\alpha})^2}  M^{-1} =: \nu^\ast\,;
\]
this fixes the value of $\varsigma$ and also in this case we can conclude
\begin{multline}\label{conclusion 2}
\text{\eqref{ass.trivial} and \eqref{alt.2}}\qquad\Longrightarrow\\
\osc_{B_{r/32} \times ( \widetilde T_r^\om, T_r^\om)} v=\sup_{B_{r/32} \times ( \widetilde T_r^\om, T_r^\om)} v \leq \osc_{Q_r^\om} v - \theta_2\left[ \omega (r) \right]^{2+\frac{\kappa}{\kappa-1}}\,,
\end{multline}
if we call $\theta_2\equiv \theta_2(n,p,\Lambda,\kappa):=\varsigma/8\in(0,1)$; recall that $\widetilde T_r^\om\leq T_r^\om/2$. We have succeeded yet again to reduce the oscillation.

\section{Deriving the modulus of continuity}\label{deriving}

We now show how the results of the previous Section lead to Theorem \ref{interior.continuity}; we fix here the value of $L$ as follows:
\begin{equation}\label{Elle}
L:=\max\Big\{\Big(\frac{32\alpha \ln 32 }{\theta} \Big)^\alpha,2p^\alpha\Lambda\Big\}\,, 
\end{equation}
for $\alpha$ defined in \eqref{ratio.kappa} and $\theta:= \min \{ \theta_1 , \theta_2\}$ (see \eqref{conclusion 1} and \eqref{conclusion 2}), and we consider a cylinder  $\overline Q_{r_0}^{\omega(\cdot)}\subset\Omega_T$, where here is
\begin{equation}\label{starting.cylinder}
\overline Q_r^{\omega(\cdot)} :=B_{r}(x_0)\times (t_0-2^{2-p}\max\{\osc_{\Omega_T} u,1 \}^{2-p}T^\om_r,t_0)\,;
\end{equation}
$T^\om_r=M[ \omega(r) ]^{(2-p)(1+1/\alpha)} r^p$ with $M$ being fixed in \eqref{eq:M fix} and $\omega(\cdot)$ now is defined according to the choice of $L$ performed above. We stress that this in particular gives
\begin{equation}\label{choice modulus.explicit}
\omega (r) \geq \Big(\frac{32\alpha \ln 32 }{\theta} \Big)^\alpha\Bigl[p+\ln\Big(\frac{r_0}{r}\Big)\Bigr]^{-\alpha}\,.
\end{equation}
The scaling we perform now is the one described in  subsection \ref{scaling}, with $T_0=t_0-T^\om_r$, $\overline T=T^\om_r$ and $\lambda:= \max\{\osc_{\Omega_T} u , 1 \}$, which allows to obtain solutions $\bar{u}_\eps$ in 
\[
\hat Q_{r_0}=B_{r_0}\times (t_0-T_{r_0}^{\omega(\cdot)},t_0)\,;
\]
note that 
\[
\osc_{\hat Q_{r_0}} \bar{u}_\eps\leq \frac{1}{2\max\{\osc_{\Omega_T} u , 1 \}}\osc_{\overline{Q}_{r_0}^{\omega(\cdot)}} u_\eps\leq 1
\]
for $\eps>0$ small enough, by local uniform convergence. Note also  that $\eps$ could depend on the starting cylinder in \eqref{starting.cylinder}, but this is not a problem here. What we prove now is
\begin{equation}\label{recover epsilon}
\osc _{\hat Q_r} \bar u_\eps \leq c\, \omega(r)+2^{8}\Lambda\eps\qquad\text{for all $r\leq r_0$}\,,
\end{equation}
for a constant $c$ depending only on $n,p,\Lambda$ and $\alpha$, and this will imply Theorem \ref{interior.continuity} in a straightforward manner, taking into account the assumed local uniform convergence of $u_\eps$ to $u$, scaling back to $\overline Q_r^\om$ and redefining the constant $M$. For radii $r\leq r_0$ we shall consider $w=\beta(\bar{u}_\eps)$ as in \eqref{w}; observe that by the Lipschitz regularity of $\beta$ we have
\[
\osc_{\hat Q_r} w=\osc_{\hat Q_{r_0}} \beta(\bar{u}_\eps)\leq \Lambda\osc_{\hat Q_{r_0}} \bar{u}_\eps\leq \Lambda\,.
\]
Finally, we shall also translate our solution $w$ to $v$ as in \eqref{v}; notice that also $\osc_{\hat Q_r} v\leq \Lambda$. 

\subsection{Iteration} To obtain \eqref{recover epsilon}, we first choose the starting point of our iteration in the following way: noting that $\omega(r_0)\geq\Lambda$, $\omega(\varrho)\to0$ as $\varrho\downarrow0$ and $\omega(\cdot)$ is continuous and increasing, we take the largest (and unique) radius $\tilde r_0\in(0,r_0]$ such that $\omega(\tilde r_0)=\Lambda$. The radius $\tilde r_0$ can be written as $r_0/\tilde c$, where $\tilde c$ depends only on $n,p,\Lambda$ and $\alpha$. We let, for $i\in \N_0$,
\[
r_i:=32^{-i} \tilde r_0,\qquad\text{and}\qquad Q_i := \hat Q_{r_i}=B_{r_i} \times \bigl(t_0-T_{r_i}^\om,t_0\bigr)\,;
\]
from now on, we will work with the function $v$ defined just above. From the analysis of Section \ref{reducing}, we got that if $\omega (r_i) \leq \osc_{Q_i} v$ and $\varepsilon< \omega (r_i)/8$, then
\begin{equation} \label{eq:basic conclusion from alts}
\osc_{Q_{i+1}}  v \leq \osc_{Q_i}  v - \theta \left[\omega (r_i)\right]^{2+\frac{\kappa}{\kappa-1}}\,.
\end{equation}
Indeed, following subsection \ref{scaling}, rescale $v$ defined in $Q_i$ to $\bar v$ in $B_{r_i}\times(0,T_{r_i}^{\omega(\cdot)})$ (take $\lambda=1$); since $\omega (r_i) \leq \osc_{B_{r_i}\times(0,T_{r_i}^{\omega(\cdot)})} \bar v$, \eqref{conclusion 1} and \eqref{conclusion 2} give that
\[
\osc_{B_{r_i/32} \times (\frac{3}{4}T_{r_i}^\om, T_{r_i}^\om)}\bar v \leq \osc_{B_{r_i}\times(0,T_{r_i}^{\omega(\cdot)})}\bar  v - \theta\left[\omega (r_i)\right]^{2+\frac{\kappa}{\kappa-1}}
\]
and, after scaling back, \eqref{eq:basic conclusion from alts} is a consequence of the fact that $T_{r_{i+1}}^\om\leq\frac14T_{r_i}^\om$: indeed, a direct calculation shows that
\begin{equation}\label{doubling.property}
\frac{\omega'(\varrho)\varrho}{\omega(\varrho)}\leq \frac{\alpha}{p}\ \ \text{for $0<\varrho\leq r_0$}\quad\Longrightarrow\quad \frac{\omega(\varrho_2)}{\omega(\varrho_1)}\leq \Bigl(\frac{\varrho_2}{\varrho_1}\Bigr)^{\frac\alpha p}\ \ \text{for }\varrho_1\leq \varrho_2\leq \varrho_0\,.
\end{equation}

\vspace{3mm}

We now show that if $\eps<\omega(r_{\bar\imath})/8$ for some $\bar\imath\in\mathbb N$, then
\begin{equation}\label{ind.step}
\osc_{Q_i}  v \leq 32 \, \omega (r_i) 
\end{equation}
for all $i\in\{0,1,\dots,\bar\imath+1\}$. Suppose then that \eqref{ind.step} holds for $i\in\{0,1,\dots,j\}$, with $j\leq \bar\imath$ and let's prove that it holds for $j+1$; note that, by the monotonicity of $\omega$, we have $\eps<\omega(r_i)/8$ for $i\in\{0,1,\dots,j\}$. 
Let now $i^\ast$ be the largest integer in $\{0,1,\ldots,j\}$ such that $\osc_{Q_{i^\ast}} v< \omega (r_{i^\ast})$ holds; note that such an index exists since $\osc_{Q_1} v \leq \Lambda = \omega(\tilde r_0)$ by our choice of $\tilde r_0$, and moreover this fixes the inductive starting step. If $i^\ast = j$, then the induction step follows from the doubling property of $\omega$, i.e., $\omega(r_{j}) \leq 32\omega(r_{j+1})$. Assume then that $i^\ast < j$ so that, by the induction assumption, we have
\[
\omega (r_i) \leq \osc_{Q_{i}} v \leq  32\omega (r_{i}) , \qquad \forall \ i \in \{i^\ast+1,\ldots,j\}\,.
\]
Therefore, \eqref{eq:basic conclusion from alts} is at our disposal for any such index (recall $\eps<\omega(r_i)/8$ for all $i\leq j$) and it leads to
$$\osc_{Q_{i+1}} v \leq \osc_{Q_{i}} v - \theta \, [\omega (r_i)]^{2+\frac{\kappa}{\kappa-1}} \leq \left( 1-\frac{\theta}{32} [\omega (r_i)]^{1+\frac{\kappa}{\kappa-1}}\right) 
\osc_{Q_{i}} v$$
for $i \in \{i^\ast+1,\ldots,j\}$. Iterating and using also the fact that $\osc_{Q_{i^\ast+1}} v \leq \osc_{Q_{i^\ast}} v \leq \omega (r_{i^\ast})$, we get
\begin{equation} \label{iter}
\osc_{Q_{j+1}} v \leq \prod_{i=i^\ast+1}^j\left( 1-\frac{\theta}{32} [\omega (r_i)]^{1+\frac{\kappa}{\kappa-1}}\right) \omega (r_{i^\ast})\,.
\end{equation}
Now, recalling that $1/\alpha=1+\kappa/(\kappa-1)$ and using \eqref{choice modulus.explicit}, we estimate
\begin{eqnarray*}
\prod_{i=i^\ast+1}^j\left( 1-\frac{\theta}{32} [\omega (r_i)]^{1+\frac{\kappa}{\kappa-1}}\right) &=& \exp \left( \sum_{i=i^\ast+1}^j \ln \left( 1-\frac{\theta}{32}[\omega (r_i)]^{1+\frac{\kappa}{\kappa-1}}\right)\right)\\
& \leq&   \exp\biggl(-\frac{\theta}{32} \frac1{\ln 32}\int_{r_j}^{r_{i^\ast}}  [\omega (\rho)]^{\frac1\alpha} \frac{d \rho}{\rho}\biggr)\\
& = &  \exp \left( -\alpha\int_{r_j}^{r_{i^\ast}}  \frac{1}{p+\ln \big( \frac{r_0}{\rho}\big)} \frac{d \rho}{\rho}\right)\\
& = &  \exp \left( -\alpha \left[ \ln \ln \Bigl(\frac{e^p r_0}{r_j} \Bigr)-  \ln \ln\Bigl( \frac{e^p r_0}{r_{i^\ast}} \Bigr)\right] \right)\\
& = &  \exp \left( - \ln \left[ \frac{p+\ln \big( \frac{r_0}{r_j}\big)}{p+\ln \big( \frac{r_0}{r_{i^\ast}}\big)}
\right]^\alpha \right)=  \frac{\omega (r_j)}{\omega (r_{i^\ast})}\,.
\end{eqnarray*}
Indeed, from the elementary estimate $\ln (1-x) \leq -x$ if $x < 1$ and the fact that $\theta[\omega (r_i)]^{1+\frac{\kappa}{\kappa-1}}/32<1$, we have
\begin{align*}
\sum_{i=i^\ast+1}^j\ln \left( 1-\frac{\theta}{32}[\omega (r_i)]^{1+\frac{\kappa}{\kappa-1}}\right) & \leq     -\frac{\theta}{32}\sum_{i=i^\ast+1}^j   [\omega (r_i)]^{1+\frac{\kappa}{\kappa-1}}\\
& \leq   -\frac{\theta}{32} \frac1{\ln 32}\int_{r_j}^{r_{i^\ast}}  [\omega (\rho)]^{\frac1\alpha} \frac{d \rho}{\rho}\,,
\end{align*}
using also the fact that $\omega (\cdot)$ is increasing. Inserting this computation in \eqref{iter} and using the doubling property of $\omega (\cdot)$, we get
\[
\osc_{Q_{j+1}} v  \leq   \frac{\omega (r_j)}{\omega (r_{i^\ast})} \omega (r_{i^\ast})  = \omega (r_j)  \leq 32 \, \omega (r_{j+1})
\]
and the (finite) induction is complete.

\subsection{Conclusion} To conclude, for $\eps\in(0,1)$ fixed, corresponding to the solution $v$, see \eqref{w} and \eqref{v}, take $\bar\imath\in\mathbb N$ as the smallest index such that $\omega(r_{\bar\imath})/8\geq\eps$. By \eqref{ind.step}, we have
\[
\osc_{Q_i} v \leq 32 \, \omega (r_i), \qquad\text{for $i\in\{0,1,\dots,\bar\imath\}$}\,.
\]
Now for a radius $r\in(r_{\bar\imath+1},\tilde r_0]$, call $\hat\imath$ the index in $\{0,1,\dots,\bar\imath\}$ such that $r_{\hat\imath+1}<r\leq r_{\hat\imath}$. We have
\[
\osc_{\hat Q_r} v\leq \osc_{Q_{\hat\imath}} v \leq 32 \, \omega (r_{\hat\imath})\leq (32)^2\,\omega (r_{\hat\imath+1})\leq (32)^2\,\omega (r)\,;
\]
on the other hand, for $r\in(0,r_{\bar\imath+1}]$ we trivially estimate 
$$
\osc_{\hat Q_r} v\leq \osc_{Q_{\bar\imath+1}} v\leq 32\omega(r_{\bar\imath+1})<2^8\eps\,. 
$$
Finally, if $r\in(\tilde r_0,r_0]$, we simply use \eqref{doubling.property} in the following way:
\[
\osc_{\hat Q_r} v\leq \omega(r_0)\leq \omega(\tilde r_0) \Bigl(\frac{r_0}{\tilde r_0}\Bigr)^{\frac\alpha p}\leq c\,\omega(\tilde r_0)\leq c\,\omega(r)\,,
\]
recalling that $\tilde r_0\equiv r_0/\tilde c(n,p,\Lambda,\alpha)$. \eqref{recover epsilon} now follows recalling that $v$ is a translation of $w$ and taking into account the Lipschitz property of $\beta$.

\vspace{5mm}

{\small {\bf Acknowledgments:} This paper was conceived, and partially written, while PB and JMU were visiting Aalto University and when TK visited the University of Coimbra, while it was finalised during the program ``Evolutionary Problems'' at the Institut Mittag-Leffler (Djursholm, Sweden) in the Fall 2013. The authors gratefully acknowledge the support and the hospitality of all these institutions. 

Research of JMU partially supported by FCT projects UTA-CMU/MAT/0007/2009 and PTDC/MAT-CAL/0749/2012, by FCT grant SFRH/BSAB/1273/2012, and by the CMUC, funded by the European Regional Development Fund, through the program COMPETE, and by the Portuguese Government, through the FCT, under the project PEst-C/MAT/UI0324/2011. TK was supported by the Academy of Finland project ``Regularity theory for nonlinear parabolic partial differential equations". Research of PB and TK were partially supported by the ERC grant 207573 ``Vectorial Problems''. }

\end{document}